

\documentclass[reqno,centertags, 12pt, draft]{amsart}

\usepackage{amsmath,amsthm,amscd,amssymb}

\usepackage{latexsym}

\usepackage{mathrsfs}


\newcommand{\bbN}{{\mathbb{N}}}

\newcommand{\bbR}{{\mathbb{R}}}

\newcommand{\bbC}{{\mathbb{C}}}

\newcommand{\calZ}{{\mathcal Z}}

\newcommand{\calW}{{\mathcal W}}


\newcommand{\ip}[2]{ \left(#1,#2 \right)}

\newcommand{\se}[2]{ \left \{#1(#2) \right \}_{#2=1}^\infty}

\newcommand{\Jab}{{J \left( \se{a}{n},\se{b}{n} \right) }}

\newcommand{\mean}[1]{\left \langle #1 \right \rangle}

\newcommand{\mat}[4]{\left( \begin{array}{cc}
#1 & #2 \\
#3 & #4
\end{array} \right)}

\newcommand{\no}{\nonumber}

\newcommand{\ti}{\tilde  }

\newcommand{\beq}{\begin{equation}}

\newcommand{\eeq}{\end{equation}}

\newcommand{\ba}{\begin{align}}

\newcommand{\ea}{\end{align}}


\newcommand{\refer}[6]{ #1, {\it #2}, #3 {\bf #4} (#5), #6.}


\allowdisplaybreaks

\numberwithin{equation}{section}




\newtheorem{theorem}{Theorem}[section]

\newtheorem{proposition}[theorem]{Proposition}

\newtheorem{lemma}[theorem]{Lemma}

\theoremstyle{definition}

\newtheorem{definition}[theorem]{Definition}

\theoremstyle{remark}

\newtheorem*{remark*}{Remark}




\begin{document}

\title[Random Jacobi Matrices with Growing Parameters]{Spectral and Dynamical Properties of Certain Random Jacobi Matrices With
Growing Parameters}

\author[J.\ Breuer]{Jonathan Breuer}

\address{Jonathan Breuer, Mathematics 253-37, California Institute of Technology, Pasadena, CA 91125, USA. 
Email: jbreuer@caltech.edu}

\date{}

\begin{abstract}
In this paper, a family of random Jacobi matrices, with off-diagonal terms that exhibit power-law growth, is studied.
Since the growth of the
randomness
is slower than that of these terms, it is possible to use methods applied in the study of Schr\"odinger operators with random
decaying potentials. A particular result of the analysis is the existence of operators with arbitrarily fast transport
whose spectral measure is zero dimensional. The results are applied to the infinite Dumitriu-Edelman model \cite{dum-edel} 
and its spectral properties are analyzed.
\end{abstract}

\maketitle

\sloppy
\section{Introduction}

For a sequence of positive numbers, $\se{a}{n}$, and a sequence, $\se{b}{n}$, of real
numbers, let $\Jab$ denote the Jacobi matrix with off-diagonal elements given by $\se{a}{n}$ and diagonal
elements given by $\se{b}{n}$ on the diagonal. That is:
\beq \label{jacobi}
J \left( \{a(n)\}_{n=1}^\infty,\{b(n)\}_{n=1}^\infty \right)=\left( \begin{array}{ccccc}
b(1)    & a(1) & 0      & 0      & \ldots \\
a(1)    & b(2) & a(2)    & 0      & \ldots \\
0      & a(2) & b(3)    & a(3)    & \ddots \\
\vdots & \ddots   & \ddots & \ddots & \ddots \\
\end{array} \right).
\eeq

For $\eta_1 \in (0,1)$ and $\lambda_1>0$, let $J_{\lambda_1,\eta_1}$ be the Jacobi matrix whose parameters are
$a_{\lambda_1, \eta_1}(n)=\lambda_1 n^{\eta_1}$ and
$b_{\lambda_1,\eta_1}(n) \equiv 0$.
Since $\eta_1<1$, $J_{\eta_1}$ is a self-adjoint operator on $\ell^2(\bbN)$ \cite{berez}.
For any such operator we may define the spectral measure, $\mu$, as the unique
measure satisfying
\beq \no
\ip{\delta_1}{(J-z)^{-1}\delta_1}=
\int_{\bbR} \frac{d\mu(x)}{x-z}
\quad z \in \bbC \setminus \bbR
\eeq
where $\ip{\cdot}{\cdot}$ denotes the inner product in $\ell^2$.
It follows from the work of Janas and Naboko \cite{janas-naboko} that
$J_{\lambda_1,\eta_1}$ has absolutely continuous spectrum covering the whole real line.

This paper deals with random perturbations of $J_{\lambda_1,\eta_1}$ that are weak, in the sense that the variance of the
perturbing random parameters grows like $\sim n^{\eta_2}$ with $\eta_2<\eta_1$.

The case $a(n) \equiv 1,\ \eta_2<0$ (``$\eta_1=0$'') and no
perturbation off the diagonal, is the extensively studied family of
one-dimensional discrete Schr\"odinger operators with a random
decaying potential \cite{delyon, De84, DSS, efgp, ku, simon-decay}.
For these Schr\"odinger operators it has been established that when
$\eta_2<-\frac{1}{2}$, the absolutely continuous spectrum of the
Laplacian is a.s.\ preserved. When $\eta_2>-\frac{1}{2}$, however,
the disorder wins over and the spectrum is pure point with
eigenfunctions that decay at a super-polynomial rate. At the
critical point ($\eta_2=-\frac{1}{2}$) the spectral behavior
exhibits a sensitive dependence on the coupling constant: The
generalized eigenfunctions decay polynomially and the spectral
measure is pure point or singular continuous according to whether
these generalized eigenfunctions are in $\ell^2$ or not (for a
comprehensive treatment of discrete Schr\"odinger operators with
random decaying potentials, see Section 8 of \cite{efgp}).

From this perspective, the extension presented in this paper is in
allowing growth of the off-diagonal terms. Intuitively, these terms
are responsible for transport, and thus, their growth should have an
effect on the spectrum similar to that of the decay of the potential
(diagonal terms). Indeed, a particular case of our analysis is that
of $\eta_2=0$, namely, the diagonal terms are i.i.d.\ random
variables. We show that the critical point here is
$\eta_1=\frac{1}{2}$. Below this value, the spectrum is a.s.\ pure
point, whereas above it the spectral measure is one-dimensional.

More generally, let $\se{X_\omega}{n}$ be a sequence of i.i.d.\ random variables.
Let $\se{Y_\omega}{n}$ be another such sequence (the distributions of the $X$'s and the $Y$'s need not be the same).
Assume the following is satisfied: \\
(i) For all $n$
\beq \label{zero-mean}
\mean{X_\omega(n)}=\mean{Y_\omega(n)}=0
\eeq
(where $\mean{f(\omega)}\equiv \int_\Omega f(\omega) dp$
 and $(\Omega, \mathcal{F}, dp)$ is the underlying probability space.)\\
(ii) For any $k \in \bbN$
\beq \label{bounded-moments}
\mean{\vert Y_\omega(n) \vert^k}<\infty, \quad \mean{\vert X_\omega(n) \vert^k}<\infty.
\eeq
(iii) \beq \label{equal-moments}
\mean{(X_\omega(n))^2}=1, \quad \mean{(Y_\omega(n))^2}=\frac{1}{4}
\eeq
(iv)
The common distribution of $X_\omega(n)$ is absolutely continuous with respect to Lebesgue measure. \\

Given the quadruple $\Upsilon=(\eta_1,\eta_2,\lambda_1,\lambda_2)$ with $0<\eta_1<1$, $\eta_2<\eta_1$ and
$\lambda_1, \lambda_2>0$,
define the two random sequences
$$b_{\Upsilon,\omega}(n) \equiv  b_{\lambda_2, \eta_2; \omega}(n) \equiv \lambda_2 n^{\eta_2} X_\omega(n), \quad
\alpha_{\lambda_2, \eta_2 ;\omega}(n) \equiv \lambda_2 n^{\eta_2} Y_\omega(n).$$ Let
$$a_{\Upsilon,\omega}(n) \equiv a_{\lambda_1,\eta_1}(n)+\alpha_{\lambda_2, \eta_2 ;\omega}(n)$$
and define
\beq \label{upsilon-ensemble}
J_{\Upsilon,\omega}=J\left( \se{a_{\Upsilon,\omega}}{n}, \se{b_{\Upsilon, \omega}}{n}\right).
\eeq

The assumptions on the parameters defining $J_{\Upsilon,\omega}$, do
not exclude the possibility that some of the off-diagonal terms will
vanish. However, with probability one, this may happen only a finite
number of times, so that $J_{\Upsilon,\omega}$ has an infinite part
with strictly positive off-diagonal entries. In the following, when
we refer to $J_{\Upsilon,\omega}$, we refer to this part.

We shall prove
\begin{theorem} \label{growing-weights}
For the model above, let $\gamma=\eta_1-\eta_2$ and let
$\Lambda=\frac{1}{2}\left( \lambda_2/\lambda_1\right)^2$. The
following holds with probability one:
\begin{enumerate}
\item If $\gamma>\frac{1}{2}$ the spectrum of $J_{\Upsilon,\omega}$ is $\bbR$ and $\mu_{\Upsilon,\omega}$,
the spectral measure of $J_{\Upsilon,\omega}$, is
one-dimensional, meaning that it does not give weight to sets of Hausdorff dimension less than $1$.
\item In the case $\gamma=\frac{1}{2}$ the spectrum of $J_{\Upsilon,\omega}$ is $\bbR$
and we have the following two possibilities:
\begin{enumerate}
\item If $\Lambda>1-\eta_1$, then $\mu_{\Upsilon,\omega}$
is pure point with eigenfunctions decaying like
\beq \no
|\psi_\omega^E(n)|^2 \sim n^{-(\Lambda+\eta_1)}.
\eeq
\item If $\Lambda \leq 1-\eta_1$, then $\mu_{\Upsilon,\omega}$
is purely singular continuous with exact Hausdorff dimension equal to $1-\frac{\Lambda}{(1-\eta_1)}$.
\end{enumerate}
\item If $\gamma<\frac{1}{2}$ then the spectrum is pure point with eigenfunctions decaying like
\beq \no
|\psi_\omega^E(n)|^2 \sim e^{-\Lambda n^{1-2\gamma}}.
\eeq
In this case, if $\eta_1>2\eta_2$ then the spectrum fills $\bbR$.
\end{enumerate}
\end{theorem}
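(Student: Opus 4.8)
The plan is to analyze the generalized eigenfunctions of $J_{\Upsilon,\omega}$ via transfer matrices and to convert their growth/decay exponents into the stated spectral statements through Gilbert--Pearson subordinacy and its power-law refinement due to Jitomirskaya and Last. First I would write the formal eigenfunction equation $a_{\Upsilon,\omega}(n)\psi(n+1)+a_{\Upsilon,\omega}(n-1)\psi(n-1)+b_{\Upsilon,\omega}(n)\psi(n)=E\psi(n)$ and pass to a symmetrized, unimodular transfer matrix $\widehat T_n(E)$. Since the determinant of the naive transfer matrix equals $a_{\Upsilon,\omega}(0)/a_{\Upsilon,\omega}(n)\sim n^{-\eta_1}$, the symmetrization forces a factor relating the true solution to the $SL(2,\bbR)$ dynamics, namely $|\psi(n)|\sim \|\widehat T_n\|^{\pm1}\,n^{-\eta_1/2}$; this bookkeeping factor $n^{-\eta_1/2}$ will be decisive in the dimension computation.

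For the free operator $J_{\lambda_1,\eta_1}$ the one-step matrix is elliptic with rotation angle $k_n$ obeying $\cos k_n\approx E/(2\lambda_1 n^{\eta_1})$, so $k_n\to\pi/2$. I would diagonalize this free dynamics and pass to EFGP-type Pr\"ufer variables $(R_n,\theta_n)$. The key structural observation is that, \emph{relative} to $a_{\lambda_1,\eta_1}(n)\sim\lambda_1 n^{\eta_1}$, both the diagonal perturbation $b_{\Upsilon,\omega}(n)$ and the off-diagonal perturbation $\alpha_{\lambda_2,\eta_2;\omega}(n)$ have size $\sim(\lambda_2/\lambda_1)\,n^{-\gamma}$ with $\gamma=\eta_1-\eta_2$. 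Thus the model is mapped onto a random \emph{decaying} perturbation of decay rate $\gamma$, with critical exponent $\gamma=\tfrac12$, in precise analogy with the $\eta_1=0$ Schr\"odinger case described in the introduction.

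The heart of the argument is to control $\log R_n^2$. Its one-step increment is, to leading order, a mean-zero random term of size $n^{-\gamma}$ (built from $X_\omega(n),Y_\omega(n)$ and trigonometric functions of $\theta_n$) plus a deterministic drift of size $n^{-2\gamma}$ coming from the second moments. The normalizations $\mean{X_\omega(n)^2}=1$ and $\mean{Y_\omega(n)^2}=\tfrac14$, together with the averaging of the Pr\"ufer trigonometric factors over the phase, are arranged exactly so that the diagonal and off-diagonal contributions combine into a single drift coefficient $\Lambda=\tfrac12(\lambda_2/\lambda_1)^2$, giving $\mean{\log R_n^2}\approx\Lambda\sum_{k\le n}k^{-2\gamma}$. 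A martingale convergence / law-of-large-numbers argument then upgrades this to almost sure behavior: for $\gamma>\tfrac12$ the increments are square-summable and $\log R_n^2$ converges, so $\widehat T_n$ is bounded; for $\gamma=\tfrac12$ one gets $R_n\sim n^{\Lambda/2}$; and for $\gamma<\tfrac12$ one gets $\log R_n^2\sim\Lambda\,n^{1-2\gamma}$, i.e.\ stretched-exponential growth.

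Finally I would feed these rates into power-law subordinacy, using $|\psi(n)|\sim R_n^{\pm1}n^{-\eta_1/2}$. In the critical case the two solutions measured in $\|\cdot\|_L$ grow like $L^{(1-\eta_1\pm\Lambda)/2}$, so the Jitomirskaya--Last dimension formula yields exact Hausdorff dimension $1-\Lambda/(1-\eta_1)$ when $\Lambda\le1-\eta_1$ (purely singular continuous), while for $\Lambda>1-\eta_1$ the subordinate solution is $\ell^2$ with $|\psi(n)|^2\sim n^{-(\Lambda+\eta_1)}$, forcing pure point spectrum; for $\gamma>\tfrac12$ both solutions grow like $L^{(1-\eta_1)/2}$ and the measure is one-dimensional, and for $\gamma<\tfrac12$ the stretched-exponential decay of the subordinate solution gives pure point spectrum with the stated rate. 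The main obstacle I anticipate is the \emph{resonance}: because $k_n\to\pi/2$ for every energy, the double-angle phases entering the drift are nearly $(-1)^n$, so the oscillatory remainders $\sum k^{-2\gamma}e^{2i\theta_k}$ are only controlled through the slowly accumulating $E$-dependent phase $\theta_n-\tfrac{\pi}{2}n\sim n^{1-\eta_1}$ (via summation by parts), and doing this uniformly enough to transfer the almost-sure-in-$\omega$ statements to almost-every (and, for the spectrum filling $\bbR$, every) energy is delicate; the extra requirement $\eta_1>2\eta_2$ in case (3) should emerge precisely from demanding that this eigenvalue-producing analysis remain valid at all energies.
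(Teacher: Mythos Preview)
Your outline is essentially the paper's approach: EFGP/Pr\"ufer variables adapted to growing $a(n)$, drift computation yielding $\Lambda$, martingale control of fluctuations, and Jitomirskaya--Last power-law subordinacy. Two genuine ingredients are missing, however.

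First, you never address how the spectrum is identified as all of $\bbR$, and your explanation of the condition $\eta_1>2\eta_2$ is wrong. The paper proves a Combes--Thomas estimate for Jacobi matrices with unbounded off-diagonal terms: if $z\notin\mathrm{Spec}(J_{\Upsilon,\omega})$ then $|\langle\delta_1,(J-z)^{-1}\delta_N\rangle|\le Ce^{-cN^{1-\eta_1}}$, which forces transfer-matrix growth of order $e^{cN^{1-\eta_1}}$ at any point outside the spectrum. In cases (1) and (2) the transfer matrices grow only polynomially, contradicting this and giving $\mathrm{Spec}=\bbR$. In case (3) the transfer matrices grow like $e^{\Lambda n^{1-2\gamma}}$, and comparing exponents $1-\eta_1$ versus $1-2\gamma$ gives the contradiction precisely when $\eta_1>2\eta_2$. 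The pure-point analysis itself works for all energies regardless of this inequality; the inequality is purely a Combes--Thomas artifact, not a constraint on ``eigenvalue-producing analysis remaining valid''.

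Second, in the critical case you write $|\psi(n)|\sim R_n^{\pm1}n^{-\eta_1/2}$ as if the existence of a solution realizing the \emph{minus} sign were automatic. It is not: knowing $\|T_n\|\sim n^{(\Lambda-\eta_1)/2}$ gives a growing solution, but to get a solution with $|\psi(n)|^2\sim n^{-(\Lambda+\eta_1)}$ you need the contracting direction of $T_n$ to stabilize. The paper handles this with an Osceledec-type lemma (their Lemma~2.6/Proposition~2.7), showing via a tail estimate on $\log(R_{\omega,0}(n)/R_{\omega,\pi/2}(n))$ that $\rho_\omega(n)\to\rho_\omega(\infty)$ with $|\rho_\omega(n)-\rho_\omega(\infty)|\lesssim n^{-\Lambda}$; without this the Jitomirskaya--Last input is incomplete. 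Finally, you should also make explicit the standard Fubini plus rank-one-perturbation step (using the absolute continuity of the law of $X_\omega(1)$) that converts ``for each $E$, almost surely'' into ``almost surely, for Lebesgue-a.e.\ $E$''.
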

\begin{remark*}
We say that a measure, $\mu$, has exact Hausdorff dimension, $\varrho$, if it is supported on a set of Hausdorff dimension
$\varrho$ and does not give weight to sets of Hausdorff dimension less than $\varrho$.
For more information concerning the decomposition of general measures with respect to their
Hausdorff-dimensional properties, consult \cite{hausdorff} and references therein.
\end{remark*}
\begin{remark*}
In analogy to the Schr\"odinger case, one would expect to have absolutely continuous spectrum for $\gamma>1/2$.
Unfortunately, though we believe this is true, one-dimensional spectral measure is all we could get.
\end{remark*}
\begin{remark*}
The requirement that $\se{X_\omega}{n}$ and $\se{Y_\omega}{n}$ be
identically distributed sequences is not really necessary and is
made here only to simplify the discussion.
\end{remark*}
In resemblance of the Schr\"odinger case, the proof of this theorem follows by analyzing the
asymptotics of solutions to the formal eigenfunction equation ``$J \psi=E\psi$''. Namely, we shall analyze
the solutions to the difference equation:
\beq \label{ev1}
a(n)\psi(n+1)+b(n)\psi(n)+a(n-1)\psi(n-1)=E\psi(n) \quad n>1.
\eeq
By a theorem of Kiselev and Last \cite[Theorem 1.2]{kiselev-last-dynamics}, the results obtained have implications
for the quantum dynamics associated with $J$; namely the behavior of a given vector $\psi$ under the operation of
the one parameter unitary group $e^{-itJ}$. More precisely, let $\hat{X}$ be the position operator defined by
\beq \no
\left( \hat{X}\psi \right)(n)=n\psi(n).
\eeq

Theorem 1.2 of \cite{kiselev-last-dynamics}, which can be seen to hold in our setting, says that
Theorem \ref{growing-weights} and Proposition \ref{decaying-sol} below lead to
\begin{theorem} \label{dynamics}
Assume $\gamma=\frac{1}{2}$ and $\Lambda \leq 1-\eta_1$ (namely, case 2(b) of Theorem
\ref{growing-weights}). Then for any $\varepsilon>0$, $m \in \bbN$, $T>0$ and $\psi \in \ell^2$,
\beq \label{dynamical-spreading}
\frac{1}{T}\int_0^T \left \vert \ip{\hat{X}^m e^{-itJ_{\Upsilon,\omega}}\psi}{e^{-itJ_{\Upsilon,\omega}}\psi} \right \vert dt
\geq C(\omega,\psi,m,\varepsilon)T^{\left(m/(1-\eta_1) \right)-\varepsilon}
\eeq
with probability one.
\end{theorem}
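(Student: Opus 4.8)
The plan is to obtain Theorem~\ref{dynamics} as a direct application of the abstract dynamical lower bound of Kiselev and Last \cite[Theorem~1.2]{kiselev-last-dynamics}, with the spectral and solution information of Theorem~\ref{growing-weights} and Proposition~\ref{decaying-sol} as the only inputs. The Kiselev--Last theorem is a general mechanism that upgrades control on the growth of solutions of the formal eigenvalue equation \eqref{ev1}, over an energy set carrying positive spectral weight, into a lower bound on the time-averaged transport moments $\frac1T\int_0^T \sum_n n^m |\psi_t(n)|^2\,dt$, where $\psi_t=e^{-itJ_{\Upsilon,\omega}}\psi$. First I would verify its hypothesis: in case 2(b) ($\gamma=\frac12$, $\Lambda\le 1-\eta_1$) the measure $\mu_{\Upsilon,\omega}$ is purely singular continuous, and Proposition~\ref{decaying-sol} shows that for $\mu_{\Upsilon,\omega}$-a.e.\ $E$ the distinguished solutions of \eqref{ev1} decay only at the polynomial rate $|\psi_\omega^E(n)|^2\sim n^{-(\Lambda+\eta_1)}$ with $\Lambda+\eta_1\le 1$; in particular no solution grows or decays exponentially, which is exactly the polynomial-boundedness required to run the argument.

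The exponent $1/(1-\eta_1)$ is produced by the growth of the off-diagonal terms. The heuristic that fixes it is semiclassical: the group velocity associated with a hopping amplitude $a(n)=\lambda_1 n^{\eta_1}$ at site $n$ is of order $n^{\eta_1}$, so a wave packet obeys $\dot n\sim n^{\eta_1}$ and reaches site $n\sim t^{1/(1-\eta_1)}$ by time $t$, giving $\sum_n n^m|\psi_t(n)|^2\sim t^{m/(1-\eta_1)}$. To make this rigorous inside the Kiselev--Last framework I would feed the transfer matrices $T(n,E)=A_n(E)\cdots A_1(E)$ of \eqref{ev1}, with $A_n(E)=\left(\begin{smallmatrix}(E-b(n))/a(n) & -a(n-1)/a(n)\\ 1 & 0\end{smallmatrix}\right)$, into the theorem's length scale $L(\omega,T)$. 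Here $\det A_n(E)=a(n-1)/a(n)=(1-1/n)^{\eta_1}\bigl(1+o(1)\bigr)$, so flux conservation forces the amplitude of the distinguished solutions to carry a factor $n^{-\eta_1/2}$, whose square is the $n^{-\eta_1}$ already present in the decay rate of case 2(a), while the disorder contributes only the additional subleading power governed by Proposition~\ref{decaying-sol}. Carrying these weights through the matching condition that defines $L(\omega,T)$ yields $L(\omega,T)\sim T^{1/(1-\eta_1)}$, and the Kiselev--Last bound $\frac1T\int_0^T\sum_n n^m|\psi_t(n)|^2\,dt\gtrsim L(\omega,T)^m$ then gives the claimed $T^{m/(1-\eta_1)}$, with logarithmic and lower-order corrections absorbed into the arbitrarily small loss $\varepsilon$.

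The main obstacle is justifying the clause ``which can be seen to hold in our setting'': \cite[Theorem~1.2]{kiselev-last-dynamics} is stated for operators with bounded off-diagonal entries, whereas here $a(n)\to\infty$. I would therefore have to recheck its proof --- the passage from transfer-matrix bounds to a lower bound on the mass $\sum_{n\le L}|\psi_t(n)|^2$ that has escaped to distance $L$, together with the spectral-averaging (Parseval) step --- and, most delicately, confirm that the weights $a(n)$ are correctly carried through the definition of $L(\omega,T)$, so that it is the lattice growth $n^{\eta_1}$ rather than the Hausdorff dimension of $\mu_{\Upsilon,\omega}$ that sets the transport rate. This last point is essential: a naive appeal to the dimension-based Guarneri--Combes--Last bound would yield only the sub-ballistic rate $T^{m(1-\Lambda/(1-\eta_1))}$, and the improvement to the super-ballistic $T^{m/(1-\eta_1)}$ is exactly what the transfer-matrix version, with $a(n)$ tracked explicitly, is meant to provide. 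Since both Theorem~\ref{growing-weights} and Proposition~\ref{decaying-sol} hold almost surely, the resulting estimate holds with probability one.
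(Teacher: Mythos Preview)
Your proposal is correct and follows essentially the same approach as the paper: both derive Theorem~\ref{dynamics} by invoking \cite[Theorem~1.2]{kiselev-last-dynamics} with Theorem~\ref{growing-weights} and Proposition~\ref{decaying-sol} as inputs, and both acknowledge (the paper only in passing, via ``which can be seen to hold in our setting'') that the Kiselev--Last result must be checked to extend to unbounded off-diagonal terms. Your account is in fact more detailed than the paper's, which gives no further argument beyond the citation.
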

Note that $\eta_1$ may be chosen arbitrarily close to $1$ while the spectral measure may have any dimension in $[0,1)$.
Thus, by tuning the parameters we obtain operators with any local spectral dimensions having arbitrarily fast transport.

As an application of our general analysis, we study the Gaussian $\beta$ ensembles arising naturally in the
context of Random Matrix Theory:
The eigenvalue distribution functions for the three classical Gaussian ensembles are given by
\begin{equation} \label{ev-distribution}
f_{\beta,N}(E_1,\cdots,E_N) = \frac{1}{G_{\beta N}}\exp
\left(-\frac{1}{2}\sum_{j=1}^N E_j^2\right
)\prod_{1\leq j<k\leq N} | E_j-E_k |^{\beta}\
\end{equation}
with $\beta=1,2$ and $4$ for the Gaussian Orthogonal Ensemble, Gaussian Unitary Ensemble and Gaussian Symplectic
Ensemble respectively.

A family of random matrix ensembles, indexed by $\beta$, having $f_{\beta,N}$ as their eigenvalue distribution function,
for \emph{any} positive value of $\beta$, was recently constructed by Dumitriu and Edelman \cite{dum-edel}.
The matrices in these ensembles are finite random Jacobi matrices with the distribution of the off diagonal terms
depending on $\beta$:
\begin{definition}
Fix $\beta>0$. The random family of Jacobi
matrices
$J_{\beta,\omega} \equiv J \left( \se{a_{\beta,\omega}}{n},\se{b_{\beta,\omega}}{n} \right)$ is
defined by:
\begin{enumerate}
\item The random variables $\se{a_{\beta,\omega}}{n},\se{b_{\beta,\omega}}{n}$ are all independent.
\item $b_{\beta, \omega}(n)$ are all standard Gaussian variables (that is, with zero mean and
variance$=1$), irrespective of $\beta$ and $n$.
\item The probability distribution function of $a_{\beta,\omega}(n)$ is given by
\beq \label{chi-dist}
P\{\omega \mid a_{\beta,\omega}(n)<C \}=\frac{2}{\Gamma
\left( \frac{\beta n}{2} \right)}
\int_{0}^C x^{\beta n-1}e^{-x^2}dx.
\eeq
\end{enumerate}
\end{definition}

In \cite{dum-edel}, Dumitriu and Edelman showed that the eigenvalue distribution function of the
finite matrix, obtained as the restriction of
$J_{\beta,\omega}$ to the $N \times N$ upper left corner, is $f_{\beta,N}$
for any $\beta>0$.

From property 3 above, it follows that
\beq \label{mean-of-an}
\mean{a_{\beta,\omega}(n)} \equiv \int_\Omega a_{\beta,\omega}(n) d\omega=
\frac{\Gamma \left( \frac{\beta n+1}{2}\right)}{\Gamma \left( \frac{\beta n}{2}\right)}
=\sqrt{\frac{\beta n}{2}}\left(1-\frac{1}{4\beta n} \right)+
\mathcal{O}\left( \frac{1}{n^{\frac{3}{2}}} \right),
\eeq

\beq \label{variance-of-an}
\mean{\left(a_{\beta,\omega}(n) -\mean{a_{\beta,\omega}(n)} \right)^2}= \frac{1}{4}+\mathcal{O}\left( \frac{1}{n} \right).
\eeq

Thus we see that the family $J_{\beta,\omega}$ corresponds to the case
$\eta_1=1/2$, $\eta_2=0$ of the general matrices introduced above. Technically, the following theorem is not
a corollary of Theorem \ref{growing-weights}, because of the
$\mathcal{O}\left( n^{-1/2} \right)$ term in \eqref{mean-of-an}
and the $\mathcal{O} \left( n^{-1} \right)$ term in \eqref{variance-of-an}.
The proof of Theorem \ref{growing-weights}, however, is robust with respect to such a change, and we have

\begin{theorem} \label{spectral-measure-GbetaE}
For any $\beta$, the spectrum of $J_{\beta,\omega}$ is $\bbR$ with probability one.

If $\beta<2$, then, with probability one, the spectral measure, $\mu_{\beta,\omega}$,
corresponding to $J_{\beta,\omega}$ and
$\delta_1$, is pure point with eigenfunctions decaying as
\beq \no
|\psi_\omega(n)|^2 \sim n^{-(\frac{1}{2}+\frac{1}{\beta})}.
\eeq

If $\beta \geq 2$, then with probability one, for any $\varepsilon>0$,
$\mu_{\beta,\omega}$ has exact dimension $1-\frac{2}{\beta}$ with probability one.
Furthermore, for $\beta \geq 2$, we have that, almost surely,
\beq \label{dynamical-spreading1}
\frac{1}{T}\int_0^T \left \vert \ip{\hat{X}^m e^{-itJ_{\beta,\omega}}\psi}{e^{-itJ_{\beta,\omega}}\psi} \right \vert
dt
\geq C(\omega,\psi,m,\varepsilon)T^{2m-\varepsilon}
\eeq
for any $\psi$, $\varepsilon>0$ and $m$.
\end{theorem}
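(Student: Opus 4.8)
The plan is to reduce Theorem \ref{spectral-measure-GbetaE} to Theorem \ref{growing-weights} by matching parameters and then checking that the lower-order error terms do not affect the conclusions. First I would identify the correspondence: the variance relations \eqref{variance-of-an} and the standard-Gaussian diagonal give $\eta_1 = 1/2$, $\eta_2 = 0$, and from \eqref{mean-of-an} the leading coefficient $\lambda_1 = \sqrt{\beta/2}$, while the diagonal variance being $1$ and the off-diagonal fluctuation variance being $1/4$ match assumption \eqref{equal-moments} after rescaling so that $\lambda_2 = 1$. This yields $\gamma = \eta_1 - \eta_2 = 1/2$ (the critical case 2 of Theorem \ref{growing-weights}) and
\[
\Lambda = \tfrac{1}{2}\left( \lambda_2/\lambda_1 \right)^2 = \tfrac{1}{2}\cdot\frac{1}{\beta/2} = \frac{1}{\beta}.
\]
The dichotomy threshold of case 2 is $\Lambda$ versus $1-\eta_1 = 1/2$, i.e. $1/\beta$ versus $1/2$, which is exactly the threshold $\beta = 2$. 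For $\beta < 2$ one has $\Lambda > 1/2$ (case 2(a)), giving pure point spectrum with $|\psi_\omega(n)|^2 \sim n^{-(\Lambda+\eta_1)} = n^{-(1/\beta + 1/2)}$, and for $\beta \geq 2$ one has $\Lambda \leq 1/2$ (case 2(b)), giving exact dimension $1 - \Lambda/(1-\eta_1) = 1 - (1/\beta)/(1/2) = 1 - 2/\beta$. The dynamical bound \eqref{dynamical-spreading1} then follows from Theorem \ref{dynamics} with $1-\eta_1 = 1/2$, so that $m/(1-\eta_1) = 2m$.

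The genuine work, as the paragraph preceding the theorem flags, is that the Gaussian $\beta$ model is \emph{not} literally an instance of $J_{\Upsilon,\omega}$: the mean $\mean{a_{\beta,\omega}(n)}$ carries a correction $-\frac{1}{4\beta n}\sqrt{\beta n/2} \sim c\,n^{-1/2}$ relative to the pure power $\lambda_1 n^{1/2}$, and the fluctuation variance \eqref{variance-of-an} carries an $\mathcal{O}(n^{-1})$ correction. Neither the diagonal nor the off-diagonal fluctuations are identically distributed across $n$ (the $a_{\beta,\omega}(n)$ are $\chi$-distributed with $n$-dependent parameter), so assumptions (ii)--(iv) must be re-examined. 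The next step is therefore to re-derive the transfer-matrix asymptotics of the eigenfunction equation \eqref{ev1} directly for this model and to verify that the proof of Theorem \ref{growing-weights}, in its case $\gamma = 1/2$, only uses the leading behavior $a(n) \sim \lambda_1 n^{\eta_1}$ together with the second-moment data of the fluctuations, all of which are robust under an additive $\mathcal{O}(n^{-1/2})$ shift in the mean and an $\mathcal{O}(n^{-1})$ shift in the variance.

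The main obstacle I expect is controlling these error terms at the level of the logarithm of the transfer-matrix norm, where the analysis is borderline. In the critical regime $\gamma = 1/2$ the decay or dimension is governed by a sum of the form $\sum_n \frac{1}{n}(\text{variance ratio})$ whose coefficient determines $\Lambda$ exactly; a perturbation of the summand by $\mathcal{O}(n^{-1})$ changes the \emph{constant} in front of $\log n$, not merely lower-order terms, so one must check that the correction is $o(1/n)$ in the relevant combination rather than merely $\mathcal{O}(1/n)$. Concretely, the $n^{-1/2}$ correction to the mean enters the relevant ratio $\alpha_\omega(n)/a_{\lambda_1,\eta_1}(n)$ as an $\mathcal{O}(n^{-1})$ deterministic shift whose contribution to the Prüfer-type phase and amplitude sums must be shown to be summable or to vanish in the Birkhoff average, so that the exponent $\Lambda = 1/\beta$ is recovered unchanged. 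Once this robustness is established—essentially repeating the EFGP-type analysis of \cite{efgp} with the exact second-moment constants of the $\chi$ and Gaussian distributions—the spectral and dynamical statements follow by the same arguments as for Theorem \ref{growing-weights} and Theorem \ref{dynamics}, noting that absolute continuity of the distribution required in (iv) holds since both the Gaussian and $\chi$ laws are absolutely continuous.
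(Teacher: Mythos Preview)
Your proposal is correct and mirrors the paper's approach exactly: the paper simply declares that the proof of Theorem \ref{growing-weights} is robust under the $\mathcal{O}(n^{-1/2})$ and $\mathcal{O}(n^{-1})$ corrections of \eqref{mean-of-an}--\eqref{variance-of-an} and does not repeat it, and your parameter identification ($\eta_1=1/2$, $\eta_2=0$, $\lambda_1=\sqrt{\beta/2}$, $\lambda_2=1$, hence $\gamma=1/2$, $\Lambda=1/\beta$) together with the derivation of the dynamical exponent $2m$ from Theorem \ref{dynamics} are all correct. Your flagged obstacle resolves favorably: the proof of Proposition \ref{transfer-asymp1} takes $\tilde a(n)=\mean{a_\omega(n)}$ (so $\alpha_\omega$ has mean zero exactly), and both the $\mathcal{O}(n^{-1})$ variance correction and the $\mathcal{O}(n^{-1/2})$ mean correction enter the critical sums there as $\mathcal{O}(j^{-2})$ perturbations of the $\mathcal{O}(j^{-1})$ summand, hence are summable and leave $\Lambda$ unchanged.
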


This result, without the dynamical part, was announced in \cite{bfs}.
We note that the analogous Circular $\beta$ Ensembles can be realized as eigenvalues of truncated CMV matrices.
This was shown by Killip and
Nenciu \cite{killip-nenciu} and later used by Killip and Stoiciu
in their analysis of level statistics for ensembles of random CMV matrices \cite{KS06}.
The bulk spectral properties of the appropriate matrices were analyzed by Simon
\cite[Section 12.7]{OPUC}.

The proof of Theorem \ref{growing-weights} is given in the next section.
Since the proof of the spectral part of Theorem \ref{spectral-measure-GbetaE} is precisely the same, it is not given separately.
As noted earlier, the dynamical part of our analysis (Theorem \ref{dynamics} and the corresponding statement in Theorem
\ref{spectral-measure-GbetaE}) follows immediately from Theorem \ref{growing-weights} and Proposition \ref{decaying-sol}, by
Theorem 1.2 of \cite{kiselev-last-dynamics}.

The method we use is a variation on the one used by Kiselev-Last-Simon \cite[Section 8]{efgp} in
their analysis of the Schr\"odinger case described above. A notable difference is the fact that, due to the growth
of the $a(n)$, the effective energy parameter, $\frac{E}{a(n)}$, vanishes in the limit. This, in addition to requiring a
modification in the technique of proof (see Lemma \ref{theta-asymptotics} and Proposition \ref{theta-sums} below),
leads to the fact that the asymptotics
of the generalized eigenfunctions are
constant over $\bbR$. At the critical point ($(\eta_1-\eta_2)=\frac{1}{2}$), this implies uniformity of the local
Hausdorff dimensions of the spectral measure.

A modified Combes-Thomas estimate, for operators with unbounded off-diagonal terms,
enters our analysis in the identification of the spectrum of $J_{\Upsilon,\omega}$. Such an estimate may be of
independent interest and thus is presented in the Appendix.


\section{Proof of Theorem \ref{growing-weights}}

We begin with a simple lemma that shows that, in a certain sense, $J_{\Upsilon,\omega}$ is a random relatively decaying
perturbation of $J_{\lambda_1,\eta_1}$.
\begin{lemma} \label{almost-sure-decay}
For any $\varepsilon>0$ there exists, with probability one, a constant
$C=C(\omega,\varepsilon)$ for which
\beq \label{almost-sure-decay1}
\left| \frac{ \left|n^{\eta_2} X_\omega(n) \right|}{n^{\eta_1}} \right| \leq
\frac{C}{n^{\gamma-\varepsilon}}
\eeq
and
\beq \label{almost-sure-decay2}
\left| \frac{ \left| n^{\eta_2}Y_\omega(n) \right|}{n^{\eta_1}} \right| \leq
\frac{C}{n^{\gamma-\varepsilon}}
\eeq
where $\gamma=\eta_1-\eta_2$.
\end{lemma}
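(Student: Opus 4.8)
The plan is to prove the two bounds by showing that, for an i.i.d.\ sequence with finite moments of all orders, the maximum of the first $n$ terms grows more slowly than any power of $n$, almost surely. Both claims reduce to the single statement that if $\se{Z_\omega}{n}$ is an i.i.d.\ sequence with $\mean{|Z_\omega(n)|^k}<\infty$ for every $k$, then for every $\delta>0$ we have $|Z_\omega(n)|\leq C(\omega,\delta)\,n^\delta$ for all $n$, with probability one. Indeed, granting this, the left-hand side of \eqref{almost-sure-decay1} is exactly $n^{-\gamma}|X_\omega(n)|$, and choosing $\delta=\veps$ gives the bound $C\,n^{-\gamma+\veps}=C\,n^{-(\gamma-\veps)}$; the identical argument with $Z=Y$ yields \eqref{almost-sure-decay2}. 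Since we may apply the statement to $X$ and $Y$ separately and then take the worse of the two constants, it suffices to establish it for a single such sequence.

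The key step is a Borel--Cantelli argument. First I would fix $\delta>0$ and estimate, for each $n$, the probability $\bbP\{|Z_\omega(n)|>n^\delta\}$. By Chebyshev's inequality applied to the $k$-th moment,
\beq \no
\bbP\{|Z_\omega(n)|>n^\delta\}\leq \frac{\mean{|Z_\omega(n)|^k}}{n^{k\delta}}=\frac{M_k}{n^{k\delta}},
\eeq
where $M_k=\mean{|Z_\omega(n)|^k}$ is finite and independent of $n$ by the i.i.d.\ assumption together with \eqref{bounded-moments}. Now I would choose $k$ large enough that $k\delta>1$, so that $\sum_n \bbP\{|Z_\omega(n)|>n^\delta\}\leq M_k\sum_n n^{-k\delta}<\infty$. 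By the first Borel--Cantelli lemma, with probability one the event $\{|Z_\omega(n)|>n^\delta\}$ occurs for only finitely many $n$. Hence, almost surely, there is a (random) index $N(\omega)$ beyond which $|Z_\omega(n)|\leq n^\delta$, and absorbing the finitely many exceptional values into the constant produces $C(\omega,\delta)$ with $|Z_\omega(n)|\leq C(\omega,\delta)\,n^\delta$ for all $n$.

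The main subtlety, rather than a genuine obstacle, is the order of quantifiers: the statement must hold \emph{simultaneously} for the fixed $\veps$ appearing in the lemma, which is harmless since $\veps$ is given in advance, but one should be careful that the constant $C$ depends on both $\omega$ and $\veps$ (equivalently $\delta$), as reflected in the lemma's statement $C=C(\omega,\veps)$. The finiteness of \emph{all} moments in \eqref{bounded-moments} is exactly what lets $k$ be taken arbitrarily large, which is essential because $\delta=\veps$ can be arbitrarily small and we need $k\delta>1$ regardless. No absolute continuity or mean-zero hypothesis is needed here; only the moment bounds enter. Finally, applying the conclusion to $X$ and to $Y$ and intersecting the two probability-one events completes the proof.
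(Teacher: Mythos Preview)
Your proposal is correct and follows essentially the same route as the paper: reduce to $\bbP\{|Z_\omega(n)|>n^{\varepsilon}\}$, bound it by Chebyshev using a sufficiently high moment from \eqref{bounded-moments} so that the resulting series is summable, and invoke Borel--Cantelli. The paper's proof is more terse but the argument is identical.
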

\begin{proof}
By \eqref{bounded-moments} and Chebyshev's inequality we have for any $k \in \bbN$
\beq \no
P_n\equiv\mathcal{P}\left\{\omega \mid \left| \frac{ \left| n^{\eta_2} X_\omega(n) \right|}
{n^{\eta_1}} \right| \geq
\frac{1}{n^{\gamma-\varepsilon}} \right\} \leq \frac{C(k)}{n^{2k \varepsilon}}.
\eeq
By choosing $2k >\varepsilon^{-1}$ we see that
\beq \no
\sum_{n=1}^\infty P_n <\infty.
\eeq
\eqref{almost-sure-decay1} follows now from Borel-Cantelli. The proof of \eqref{almost-sure-decay2} is the same.
\end{proof}

As stated in the Introduction, we follow the strategy of \cite{efgp}.
In particular, we will deduce the spectral properties
of $J_{\Upsilon,\omega}$ from the asymptotics of the solutions to the corresponding eigenfunction equation.

In order to fix notation, for a given Jacobi matrix $\Jab$ and a fixed $E \in \bbR$, denote by $\psi^E$ a solution to the
equation
\beq \label{ev}
a(n)\psi^E(n+1)+b(n)\psi^E(n)+a(n-1)\psi^E(n-1)=E\psi^E(n) \quad n>1.
\eeq
It is customary to extend this equation to $n=1$ by defining $a(0)=1$. Clearly, the space of
sequences $\{\psi^E(n)\}_{n=0}^\infty$ solving \eqref{ev} is a two-dimensional vector space and
any such sequence is completely determined by its values at $0$ and $1$. We let
$\psi^E_\phi(n)$ stand for the solution of \eqref{ev} satisfying
\beq \label{psi-phi}
\psi^E_\phi(0)=\sin(\phi) \qquad \psi^E_\phi(1)=\cos(\phi).
\eeq
We note that, formally, $\psi^E_0(n)$ satisfies
\beq \no
J \psi^E_0=E \psi^E_0.
\eeq

Let
\beq \no
S^E(n)=\mat{\frac{E-b(n)}{a(n)}}{-\frac{a(n-1)}{a(n)}}{1}{0}
\eeq
and
\beq \no
T^E(n)=S^E(n) \cdot S^E(n-1)\cdots  S^E(1).
\eeq
Then, for any $\phi$,
\beq \no
\left( \begin{array}{c}
\psi^E_\phi(n+1) \\
\psi^E_\phi(n)
\end{array}
 \right)=T^E(n) \left( \begin{array}{c}
\psi^E_\phi(1) \\
\psi^E_\phi(0)
\end{array}
 \right),
\eeq
and so
\beq \no
T^E(n)=\mat{\psi^E_0(n+1)}{\psi^E_{\frac{\pi}{2}}(n+1)}{\psi^E_0(n)}{\psi^E_{\frac{\pi}{2}}(n)}.
\eeq

We call the matrices $S^E(n)$ defined above \emph{one-step transfer matrices}, and for the
matrices $T^E(n)$, we use the name \emph{$n$-step transfer matrices}. Our main technical result is
\begin{theorem} \label{transfer-asymp}
Let $J_{\Upsilon,\omega}$ be the family of random Jacobi matrices described
in the Introduction. Then, for any $E \in \bbR$, the following holds with probability one:
\begin{enumerate}
\item If $\gamma>\frac{1}{2}$
\beq \label{supercritical-asymp}
\lim_{n \rightarrow \infty} \frac{\log \parallel T^E_\omega(n) \parallel^2}{\log(n)}=-\eta_1
\eeq
\item If $\gamma=\frac{1}{2}$
\beq \label{critical-asymp}
\lim_{n \rightarrow \infty} \frac{\log \parallel T^E_\omega(n) \parallel^2}{\log(n)}=
\Lambda-\eta_1.
\eeq
\item If $\gamma<\frac{1}{2}$
\beq \label{subcritical-asymp}
\lim_{n \rightarrow \infty} \frac{\log \parallel T^E_\omega(n) \parallel^2}{n^{1-2\gamma}}=
\Lambda.
\eeq
\end{enumerate}
\end{theorem}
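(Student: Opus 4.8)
The plan is to reduce all three statements to the growth of the \emph{normalized}, $SL(2,\bbR)$ transfer matrices and then run an EFGP-type Pr\"ufer analysis adapted to the fact that the effective rotation angle is asymptotically $\pi/2$. First I would extract the determinant. Since $\det S^E(n)=a(n-1)/a(n)$ and $a(0)=1$, the product telescopes to $\det T^E(n)=1/a(n)>0$, so writing $T^E(n)=\sqrt{\det T^E(n)}\,\widetilde T^E(n)$ with $\widetilde T^E(n)\in SL(2,\bbR)$ gives $\log\|T^E(n)\|^2=-\log a(n)+\log\|\widetilde T^E(n)\|^2$. By Lemma \ref{almost-sure-decay}, $a(n)=\lambda_1 n^{\eta_1}(1+o(1))$ almost surely, so $-\log a(n)/\log n\to-\eta_1$; this single term produces the common $-\eta_1$ in cases (1) and (2) and is negligible against $n^{1-2\gamma}$ in case (3). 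Everything thus reduces to the growth of $\|\widetilde T^E(n)\|$.

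For the Pr\"ufer step I observe that $\widetilde S^E(n)=\sqrt{a(n)/a(n-1)}\,S^E(n)$ converges, as $n\to\infty$, to the rotation $\mat{0}{-1}{1}{0}$, the deviation from it being $O(n^{-\gamma})$ in the randomness (via $(\lambda_2/\lambda_1)X_\omega(n)$ on the diagonal and $(\lambda_2/\lambda_1)(Y_\omega(n-1)-Y_\omega(n))$ off-diagonal, controlled by Lemma \ref{almost-sure-decay}) together with deterministic pieces of sizes $O(n^{-\eta_1})$ and $O(1/n)$. I would introduce a modified Pr\"ufer radius $R_n$ and phase $\theta_n$ adapted to this slowly-varying \emph{elliptic} dynamics; because a perturbed rotation has no fixed expanding or contracting direction, $\log\|\widetilde T^E(n)\|$ and $\log R_n$ share the same leading asymptotics, for every initial direction. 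Expanding the radial map to second order yields $\log R_{n+1}^2-\log R_n^2=L_n+Q_n+\mathcal E_n$, where $L_n$ is linear in the random variables and conditionally mean zero given the past, $Q_n$ is quadratic of size $O(n^{-2\gamma})$, and $\mathcal E_n=O(n^{-3\gamma})$; in parallel $\theta_{n+1}=\theta_n+\tfrac{\pi}{2}+O(n^{-\min(\gamma,\eta_1)})$.

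Next come the two sums. The martingale sum $\sum L_n$ has conditional variance $\sim n^{-2\gamma}$; using the moment bounds \eqref{bounded-moments} and \eqref{zero-mean}, a Kolmogorov/martingale strong-law argument with Borel--Cantelli shows $\sum_{n\le N}L_n$ is a.s. of strictly lower order than $\sum_{n\le N}n^{-2\gamma}$ in each regime (summable for $\gamma>\tfrac12$, of size $O(\sqrt{\log N\,\log\log N})$ for $\gamma=\tfrac12$, and $o(N^{1-2\gamma})$ for $\gamma<\tfrac12$), so it never affects the leading rate. For the quadratic sum I would replace the phase-dependent trigonometric coefficients in $Q_n$ by their circle averages; careful bookkeeping of the diagonal contribution of $X$ and the off-diagonal contribution of $Y$ (the latter entering two consecutive one-step matrices, so that neighboring steps must be resummed), combined with $\mean{X_\omega(n)^2}=1$ and $\mean{Y_\omega(n)^2}=\tfrac14$ from \eqref{equal-moments}, yields $\sum_{n\le N}Q_n\sim\Lambda\sum_{n\le N}n^{-2\gamma}$ almost surely with $\Lambda=\tfrac12(\lambda_2/\lambda_1)^2$. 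Since $\sum_{n\le N}n^{-2\gamma}$ converges, is $\sim\log N$, or is $\sim N^{1-2\gamma}/(1-2\gamma)$ according as $\gamma\gtreqless\tfrac12$, combining with the determinant term produces exactly (1), (2) and (3).

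The hard part is justifying the phase averaging, and this is precisely where the growth of the $a(n)$ changes the problem. Because $E/a(n)\to0$, the rotation angle is asymptotically exactly $\pi/2$, so $\{\theta_n\}$ is to leading order $4$-periodic and does \emph{not} equidistribute in the naive manner available in the fixed-energy Schr\"odinger case (where $2nk$ sweeps the circle for generic $k$). Instead the required equidistribution of the moments $\mean{\cos 2\theta_n},\mean{\cos 4\theta_n},\dots$ must be extracted from the accumulated drift of $\theta_n$: the deterministic drift $\sim E\,n^{1-\eta_1}/(\lambda_1(1-\eta_1))\to\infty$ when $E\ne0$, and, when $E=0$, the diffusive drift generated by the noise itself. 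Note that the deterministic energy term is essentially a rotation generator, so it feeds the phase but not the radius — consistent with the boundedness of the free transfer matrices (the a.c.\ spectrum of Janas--Naboko) and with the $E$-independence of the rate. This self-averaging is the content I would invoke from Lemma \ref{theta-asymptotics} and Proposition \ref{theta-sums}; the delicate point, and the mechanism behind the uniform local dimension at criticality, is that it holds for every fixed $E\in\bbR$ with probability one and produces the \emph{same} constant $\Lambda$ for all $E$, including $E=0$ where only the randomness drives the equidistribution.
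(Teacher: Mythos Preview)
Your overall architecture matches the paper's: an EFGP/Pr\"ufer reduction, a second-order expansion of $\log R_{n+1}^2-\log R_n^2$, a martingale bound on the linear piece, and replacement of the quadratic piece by its phase average. The determinant extraction is a cosmetic variant of the paper's conjugation by $K_\omega(n)=\mathrm{diag}(1,a_\omega(n))$; note however that the paper's choice makes the one-step matrix $\ti S_\omega(n)$ depend only on $a_\omega(n)$ and $b_\omega(n)$, whereas your $\sqrt{a(n)/a(n-1)}\,S^E(n)$ contains both $Y_\omega(n)$ and $Y_\omega(n-1)$---this is exactly the ``neighboring steps must be resummed'' complication you flag, and the paper simply sidesteps it.

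The real gap is your account of the phase averaging. You describe it as equidistribution of $\cos 2\theta_n,\cos 4\theta_n,\ldots$ driven by accumulated drift (deterministic $\sim n^{1-\eta_1}$ for $E\ne0$, diffusive for $E=0$). That is not the mechanism, and for $\cos 4\theta$ it cannot work. Proposition~\ref{theta-sums} handles \emph{only} $\cos 2\theta$ (and $\cos 2\bar\theta$), and its proof is pure alternation: since $\theta_{n+1}\approx\bar\theta_n=\theta_n+k_n$ with $k_n\to\pi/2$, one pairs $\cos(2\theta_{2l})+\cos(2\theta_{2l-1})=O(l^{-\min(\gamma,\eta_1)+\varepsilon})$ and sums by parts; no drift enters, and the argument is identical for $E=0$ and $E\ne0$. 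For $\cos 4\bar\theta$ the rotation advances $4\bar\theta$ by $\approx 2\pi$, so there is no alternation; the paper explicitly says Proposition~\ref{theta-sums} is ``useless'' there. What actually happens is that the leading $j^{-2\gamma}$ coefficient of $\cos 4\bar\theta$ in $(\calZ_\omega,\calW_\omega)^2$ cancels \emph{algebraically}, because $4\mean{Y_\omega^2}=\mean{X_\omega^2}=1$ turns the relevant combination into $\sin^4\bar\theta+\tfrac14\sin^2 2\bar\theta=\tfrac12-\tfrac12\cos 2\bar\theta$, leaving only an $O(j^{-2\gamma-1}+j^{-2\gamma-2\eta_1})$ remainder on $\cos 4\bar\theta$ that is harmless without any averaging. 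This cancellation---not drift---is the reason the normalization \eqref{equal-moments} is imposed, and it is the step your outline is missing; a drift argument at $E=0$ would not rescue it.
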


The EFGP transform (see \cite{efgp}) is a useful tool for studying the asymptotic behavior of
$\parallel T^E(n) \parallel$ in the Schr\"odinger case ($a(n) \equiv 1$). For $a(n) \rightarrow \infty$, certain
modifications are needed. We proceed to present a version that is suitable for our purposes.

Let $J \left( \{a_\omega(n)\}_{n=1}^\infty \{b_\omega(n)\}_{n=1}^\infty \right)$ be a Jacobi matrix whose entries are
all independent random variables. Let $\ti{a}(n)=\mean{a_\omega(n)}$ and $\alpha_\omega=a_\omega(n)-\ti{a}(n)$, and
assume that
\beq \label{an-growth}
\lim_{n \rightarrow \infty}\ti{a}(n)=\infty
\eeq
and that
\beq \label{as-decay}
\lim_{n \rightarrow \infty}\frac{\alpha_\omega(n)}{\ti{a}(n)}=0
\eeq
with probability one.
These properties clearly hold for $J_{\Upsilon,\omega}$ (see Lemma \ref{almost-sure-decay}).
In the analysis that follows we keep $E \in \bbR$ fixed so we omit it from the notation.
Define
\beq \no
K_\omega(n)= \mat{1}{0}{0}{a_\omega(n)}.
\eeq
Then,
\beq \no
\ti{S}_\omega(n) \equiv K_\omega(n)S(n)K_\omega(n-1)^{-1}=
\mat{\frac{E-b_\omega(n)}{a_\omega(n)}}{-\frac{1}{a_\omega(n)}}{a_\omega(n)}{0}
\eeq
and
\beq \no
\ti{T}_\omega(n) \equiv \ti{S}_\omega(n) \cdot \ti{S}_\omega(n-1)\cdots\ti{S}_\omega(1) =
K_\omega(n) T_\omega(n).
\eeq
For any $\phi$, define the sequences $\se{u_{\omega,\phi}}{n}$ and
$\se{v_{\omega,\phi}}{n}$ by
\beq \no
\left( \begin{array}{c}
u_{\omega,\phi}(n) \\
v_{\omega,\phi}(n)
\end{array} \right)=\ti{T}_\omega(n) \left( \begin{array}{c}
\cos(\phi) \\
\sin(\phi)
\end{array} \right),
\eeq
so that, from the definition of $\psi_{\omega,\phi}$($=\psi_\phi$ for the random Jacobi
parameters), we see that
\beq \label{kn-solutions}
u_{\omega,\phi}(n)=\psi_{\omega,\phi}(n+1) \qquad
v_{\omega,\phi}(n)=a_\omega(n)\psi_{\omega,\phi}(n).
\eeq

By \eqref{an-growth} we see that for any $E \in \bbR$ and sufficiently
large $n$, we may define $k_n \in (0, \pi)$ by
\beq \label{kn}
2 \cos(k_n)=\frac{E}{\ti{a}(n)}.
\eeq
Clearly, $k_n \rightarrow \frac{\pi}{2}$ as $n \rightarrow \infty$.

Now, define $R_{\omega,\phi}(n)$ and $\theta_{\omega,\phi}(n)$ through
\beq \label{EFGP1}
R_{\omega,\phi}(n) \sin(\theta_{\omega,\phi}(n))=v_{\omega,\phi}(n) \sin(k_n)
\eeq
and
\beq \label{EFGP2}
R_{\omega,\phi}(n) \cos(\theta_{\omega,\phi}(n))=\ti{a}(n)u_{\omega,\phi}(n)-
v_{\omega,\phi}(n)\cos(k_n)
\eeq
so that (using \eqref{kn-solutions})
\beq \no
\begin{split}
R_{\omega,\phi}(n)^2&=v_{\omega,\phi}(n)^2+
\ti{a}(n)^2u_{\omega,\phi}(n)^2-2\ti{a}(n)u_{\omega,\phi}(n)v_{\omega,\phi}(n)\cos(k_n) \\
&= a_\omega(n)^2\psi_{\omega,\phi}(n)^2+
\ti{a}(n)^2 \psi_{\omega,\phi}(n+1)^2 \\
& \quad -a_\omega(n) E \psi_{\omega,\phi}(n+1)
\psi_{\omega,\phi}(n),
\end{split}
\eeq
which leads to
\beq \label{EFGP-compare}
\begin{split}
\frac{R_{\omega,\phi}(n)^2}
{\ti{a}(n)^2\left( \psi_{\omega,\phi}(n)^2+\psi_{\omega,\phi}(n+1)^2 \right)}
&=1+\frac{2 \alpha_\omega(n) \ti{a}(n) \psi_{\omega,\phi}(n)^2}
{\ti{a}(n)^2\left( \psi_{\omega,\phi}(n)^2+\psi_{\omega,\phi}(n+1)^2 \right)} \\
& \quad +\frac{2 \alpha_\omega(n)^2 \psi_{\omega,\phi}(n)^2}
{\ti{a}(n)^2\left( \psi_{\omega,\phi}(n)^2+\psi_{\omega,\phi}(n+1)^2 \right)} \\
&\quad -\frac{Ea_\omega(n)\psi_{\omega,\phi}(n) \psi_{\omega,\phi}(n+1)}
{\ti{a}(n)^2\left( \psi_{\omega,\phi}(n)^2+\psi_{\omega,\phi}(n+1)^2 \right)}.
\end{split}
\eeq
By \eqref{as-decay}, it follows that the right hand side converges to one with
probability $1$, uniformly in $\phi$, so that almost surely, for sufficiently large $n$, there
are constants $C_1,C_2>0$ such that
\beq \no
C_1R_{\omega,\phi}(n)^2
\leq\ti{a}(n)^2\left( \psi_{\omega,\phi}(n)^2+\psi_{\omega,\phi}(n+1)^2 \right)
\leq C_2R_{\omega,\phi}(n)^2
\eeq
Now, by a straightforward adaptation of Lemma 2.2 of \cite{efgp} it follows that for any two
angles $\phi_1 \neq \phi_2$, there are constants $C_3,C_4>0$ such that
\beq \label{compare-Rn-transfer}
\begin{split}
C_3 \max(R_{\omega,\phi_1}(n)^2 ,R_{\omega,\phi_2}(n)^2)
&\leq \ti{a}(n)^2 \parallel T_\omega(n) \parallel^2 \\
&\leq C_4 \max(R_{\omega,\phi_1}(n)^2 ,R_{\omega,\phi_2}(n)^2).
\end{split}
\eeq
Thus, we are led to examine the asymptotic properties of $\log R_{\omega,\phi}(n)$.

Let us formulate a recursion relation for $R_{\omega,\phi}(n)^2$:
\eqref{EFGP1} and \eqref{EFGP2} mean
\beq \no
\begin{split}
&\left( \begin{array}{c}
R_{\omega,\phi}(n) \sin(\theta_{\omega,\phi}(n)) \\
R_{\omega,\phi}(n) \cos(\theta_{\omega,\phi}(n))
\end{array} \right) \\
& = \mat{0}{\sin(k_{n})}{\ti{a}(n)}{-\cos(k_{n})}
\left( \begin{array}{c}
u_{\omega,\phi}(n) \\
v_{\omega,\phi}(n)
\end{array} \right) \\
& = \mat{0}{\sin(k_{n})}{\ti{a}(n)}{-\cos(k_{n})}
\mat{1}{0}{0}{a_\omega(n)}
\left( \begin{array}{c}
\psi_{\omega,\phi}(n+1) \\
\psi_{\omega,\phi}(n)
\end{array} \right).
\end{split}
\eeq
We also know
\beq \no
\left( \begin{array}{c}
\psi_{\omega,\phi}(n+2) \\
\psi_{\omega,\phi}(n+1)
\end{array} \right)
=S_\omega(n+1)
\left( \begin{array}{c}
\psi_{\omega,\phi}(n+1) \\
\psi_{\omega,\phi}(n)
\end{array} \right),
\eeq
so
\beq \label{Rn-recursion1}
\begin{split}
&\left( \begin{array}{c}
R_{\omega,\phi}(n+1) \sin(\theta_{\omega,\phi}(n+1)) \\
R_{\omega,\phi}(n+1) \cos(\theta_{\omega,\phi}(n+1))
\end{array} \right) \\
& = \mat{0}{\sin(k_{n+1})}{\ti{a}(n+1)}{-\cos(k_{n+1})}
\mat{1}{0}{0}{a_\omega(n+1)}
S_\omega(n+1) \\
& \quad \cdot
\mat{1}{0}{0}{a_\omega(n)}^{-1}
\mat{0}{\sin(k_{n})}{\ti{a}(n)}{-\cos(k_{n})}^{-1}
\left( \begin{array}{c}
R_{\omega,\phi}(n) \sin(\theta_{\omega,\phi}(n)) \\
R_{\omega,\phi}(n) \cos(\theta_{\omega,\phi}(n))
\end{array} \right) \\
& = \mat{0}{\sin(k_{n+1})}{\ti{a}(n+1)}{-\cos(k_{n+1})}
\ti{S}_\omega(n+1) \\
& \quad \cdot\mat{0}{\sin(k_{n})}{\ti{a}(n)}{-\cos(k_{n})}^{-1}
\left( \begin{array}{c}
R_{\omega,\phi}(n) \sin(\theta_{\omega,\phi}(n)) \\
R_{\omega,\phi}(n) \cos(\theta_{\omega,\phi}(n))
\end{array} \right). \\
\end{split}
\eeq
Now, write
\beq \no
\begin{split}
\ti{S}_\omega(n+1)&=\mat{\frac{E-b_\omega(n+1)}{a_\omega(n+1)}}{-\frac{1}{a_\omega(n+1)}}
{a_\omega(n+1)}{0} \\
&=\frac{\ti{a}(n+1)}{a_\omega(n+1)}
\Bigg(
\mat{\frac{E}{\ti{a}(n+1)}}{-\frac{1}{\ti{a}(n+1)}}{\ti{a}(n+1)}{0} \\
& \quad +
\mat{-\frac{b_\omega(n+1)}{\ti{a}(n+1)}}{0}{\frac{a_\omega(n+1)^2-\ti{a}(n+1)^2}{\ti{a}(n+1)}}{0}
\Bigg).
\end{split}
\eeq

We define
\beq \no
\begin{split}
\calZ_\omega(n+1)&=\mat{0}{\sin(k_{n+1})}{\ti{a}(n+1)}{-\cos(k_{n+1})}
\mat{\frac{E}{\ti{a}(n+1)}}{-\frac{1}{\ti{a}(n+1)}}{\ti{a}(n+1)}{0} \\
& \quad \cdot
\mat{0}{\sin(k_{n})}{\ti{a}(n)}{-\cos(k_{n})}^{-1}
\left( \begin{array}{c}
\sin(\theta_{\omega,\phi}(n)) \\
\cos(\theta_{\omega,\phi}(n))
\end{array} \right),
\end{split}
\eeq
and
\beq \no
\begin{split}
\calW_\omega(n+1)&=\mat{0}{\sin(k_{n+1})}{\ti{a}(n+1)}{-\cos(k_{n+1})}
\mat{-\frac{b_\omega(n+1)}{\ti{a}(n+1)}}{0}{\frac{a_\omega(n+1)^2-\ti{a}(n+1)^2}{\ti{a}(n+1)}}{0} \\
& \quad \cdot
\mat{0}{\sin(k_{n})}{\ti{a}(n)}{-\cos(k_{n})}^{-1}
\left( \begin{array}{c}
\sin(\theta_{\omega,\phi}(n)) \\
\cos(\theta_{\omega,\phi}(n))
\end{array} \right)
\end{split}
\eeq
(we ignore the dependence on $\phi$ since we keep it fixed).
Then, from \eqref{Rn-recursion1} we see that
\beq \label{Rn-recursion2}
\frac{R_{\omega,\phi}(n+1)^2}{R_{\omega,\phi}(n)^2}
=\frac{\ti{a}(n+1)^2}{a_\omega(n+1)^2}
\parallel \calZ_\omega(n+1)+\calW_\omega(n+1) \parallel^2.
\eeq

$\theta_{\omega,\phi}(n)$ satisfies a recurrence relation as well:
From \eqref{kn-solutions} we have that
\beq \label{v-u}
v_{\omega,\phi}(n+1)=a_\omega(n+1)u_{\omega,\phi}(n)
\eeq
and
\beq \label{schrodinger-u}
a_\omega(n+1)u_{\omega,\phi}(n+1)+b_\omega(n+1)u_{\omega,\phi}(n)+
a_{\omega}(n)u_{\omega,\phi}(n-1)=Eu_{\omega,\phi}(n).
\eeq
Write, using \eqref{EFGP1}-\eqref{EFGP2}
\beq \label{cot1}
\begin{split}
\cot(\theta_{\omega,\phi}(n+1))&=\frac{\ti{a}(n+1)u_{\omega,\phi}(n+1)-
\cos(k_{n+1})v_{\omega,\phi}(n+1)}{\sin(k_{n+1})v_{\omega,\phi}(n+1)} \\
&=\frac{\ti{a}(n+1)u_{\omega,\phi}(n+1)-
a_\omega(n+1)\cos(k_{n+1})u_{\omega,\phi}(n)}{\sin(k_{n+1})a_\omega(n+1)u_{\omega,\phi}(n)}.
\end{split}
\eeq
Furthermore, observing that
\beq \no
R_{\omega,\phi}(n)\sin(\theta_{\omega,\phi}(n)+k_n)= \ti{a}(n)\sin(k_n)u_{\omega,\phi}(n),
\eeq
and
\beq \no
\begin{split}
R_{\omega,\phi}(n)\cos(\theta_{\omega,\phi}(n)+k_n)&= \ti{a}(n)\cos(k_n)u_{\omega,\phi}(n)-
v_{\omega,\phi}(n) \\
&= \ti{a}(n)\cos(k_n)u_{\omega,\phi}(n)-
a_\omega(n)u_{\omega,\phi}(n-1),
\end{split}
\eeq
we may write
\beq \label{cot2}
\cot(\theta_{\omega,\phi}(n)+k_n)=\frac{\ti{a}(n)\cos(k_n)u_{\omega,\phi}(n)-
a_\omega(n)u_{\omega,\phi}(n-1)}{\ti{a}(n)\sin(k_n)u_{\omega,\phi}(n)}.
\eeq

Substituting $u_{\omega,\phi}(n+1)$ from \eqref{schrodinger-u} into \eqref{cot1} and then
$u_{\omega,\phi}(n-1)$ from \eqref{cot2} into the resulting equation, we get
\beq \label{recursion-for-theta}
\begin{split}
\cot(\theta_{\omega,\phi}(n+1))&=\frac{\ti{a}(n+1)\ti{a}(n)}{a_\omega(n+1)^2}
\frac{\sin(k_{n})}{\sin(k_{n+1})}\cot(\theta_{\omega,\phi}(n)+k_n) \\
&\quad+\cot(k_{n+1})\left(\frac{\ti{a}(n+1)^2}{a_\omega(n+1)^2}-1 \right) \\
&-\frac{\ti{a}(n+1)}{\sin(k_n)a_\omega(n+1)^2}b_\omega(n+1) \\
&\equiv \kappa_\omega(n+1)\cot(\bar{\theta}_{\omega,\phi}(n))+\zeta_\omega(n+1)
\end{split}
\eeq
where
\beq \no
\bar{\theta}_{\omega,\phi}(n)=\theta_{\omega,\phi}(n)+k_n.
\eeq

By \eqref{compare-Rn-transfer} and picking two angles $\phi_1 \neq \phi_2$, Theorem \ref{transfer-asymp} follows from
\begin{proposition} \label{transfer-asymp1}
Let $J_{\Upsilon,\omega}$ be the family of random Jacobi matrices described
in the Introduction. Then, for any $E \in \bbR$, and for any $\phi$, the following holds with probability one:
\begin{enumerate}
\item If $\gamma>\frac{1}{2}$
\beq \label{supercritical-asymp1}
\lim_{n \rightarrow \infty} \frac{\log R_{\omega,\phi}^E(n)^2}{\log(n)}=\eta_1
\eeq
\item If $\gamma=\frac{1}{2}$
\beq \label{critical-asymp1}
\lim_{n \rightarrow \infty} \frac{\log R_{\omega,\phi}^E(n)^2}{\log(n)}=
\Lambda+\eta_1.
\eeq
\item If $\gamma<\frac{1}{2}$
\beq \label{subcritical-asymp1}
\lim_{n \rightarrow \infty} \frac{\log R^E_{\omega,\phi}(n)^2}{n^{1-2\gamma}}=
\Lambda.
\eeq
\end{enumerate}
\end{proposition}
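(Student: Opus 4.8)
The plan is to pass to logarithms in the multiplicative recursion \eqref{Rn-recursion2} and to study the telescoping sum
\beq\no
\log R_{\omega,\phi}(N)^2 = \log R_{\omega,\phi}(1)^2 + \sum_{n=1}^{N-1}\left(2\log\f{\ti a(n+1)}{a_\omega(n+1)} + \log\parallel \calZ_\omega(n+1)+\calW_\omega(n+1)\parallel^2\right).
\eeq
Three scales govern each increment: the deterministic smallness $\ti a(n+1)/\ti a(n)-1\sim \eta_1/n$ and $\cos k_n\sim n^{-\eta_1}$, and the random smallness $p_n:=\alpha_\omega(n)/\ti a(n)$, $q_n:=b_\omega(n)/\ti a(n)$, both of order $n^{-\gamma}$ by Lemma \ref{almost-sure-decay}. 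Expanding $\log(1+x)=x-\tfrac12 x^2+\cdots$ and using that a short computation gives the leading forms $\calZ_\omega(n+1)\approx(\cos\theta_n,-\sin\theta_n)$ and $\calW_\omega(n+1)\approx\cos\theta_n\,(2p_{n+1},-q_{n+1})$, I would write each increment as $A_n+B_n+C_n$, where $A_n$ is purely deterministic, $B_n$ is linear in $(p_{n+1},q_{n+1})$ and hence of conditional mean zero, and $C_n$ collects the quadratic-and-higher random terms.

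For the deterministic part, evaluating $\parallel\calZ_\omega(n+1)\parallel^2$ shows its non-oscillatory piece to be $1+\eta_1/n$ up to summable errors (the $\cos^2 k_n\sim n^{-2\eta_1}$ terms cancel in the computation), so that $\sum_n A_n=\eta_1\log N + O(1)$. This already produces case (1): when $\gamma>\tfrac12$ the random contributions will be shown negligible, leaving the exponent $\eta_1$. The genuinely oscillatory parts of $A_n$, and the mean-zero $B_n$, are controlled through the rotation of the Pr\"ufer phase: \eqref{recursion-for-theta} gives $\theta_{\omega,\phi}(n+1)=\theta_{\omega,\phi}(n)+k_n+o(1)$, so summation by parts against slowly varying amplitudes yields convergent bounds. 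For $\sum_n B_n$ I would additionally use that $\mean{B_n\mid\mathcal{F}_{n-1}}=0$ with $\mean{B_n^2}\sim n^{-2\gamma}$: by the Kolmogorov three-series / martingale argument the series converges almost surely when $\gamma>\tfrac12$, while its partial sums are $O(N^{(1-2\gamma)/2})=o(N^{1-2\gamma})$ when $\gamma<\tfrac12$ and $o(\log N)$ when $\gamma=\tfrac12$, so $B_n$ never affects the leading rate.

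The heart of the matter is $\sum_n C_n$. One computes the conditional mean $\mean{C_n\mid\mathcal{F}_{n-1}}\sim c_n\, n^{-2\gamma}$, where $c_n$ is governed by $\mean{X^2}=1$, $\mean{Y^2}=\tfrac14$, $\Lambda=\tfrac12(\lambda_2/\lambda_1)^2$, and by the relevant average of the phase $\theta_n$. Granting the evaluation of that phase average, summation gives $\Lambda\log N$ in case (2) ($\gamma=\tfrac12$), hence total exponent $\Lambda+\eta_1$, and a term of order $N^{1-2\gamma}$ in case (3) ($\gamma<\tfrac12$) that dominates $\eta_1\log N$ and yields exponent $\Lambda$; the matching of the two normalizations to the stated constant is itself an output of the phase analysis below. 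To pass from conditional means to the actual sums I would establish an $L^2$ law of large numbers, bounding $\mathrm{Var}\!\left(\sum C_n\right)$ via the covariances $\mean{(C_m-\mean{C_m})(C_n-\mean{C_n})}$ — here independence of the fresh pair $(X_n,Y_n)$ from $\mathcal{F}_{n-1}$ keeps off-diagonal correlations small — followed by Chebyshev and Borel--Cantelli along a subsequence, with gaps filled by monotonicity.

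The main obstacle, and the precise point of departure from the Schr\"odinger treatment of \cite{efgp}, is the evaluation of these phase-dependent averages. Because $E/\ti a(n)\to0$, one has $k_n\to\tfrac{\pi}{2}$ for \emph{every} energy $E$, so the phase increment is resonant: $2\theta_n\approx n\pi+2c_n$ with a slowly varying $c_n$, whence $e^{2i\theta_n}$ behaves like $(-1)^n$ times a slow factor and the naive equidistribution $\mean{\cos^2 2\theta}=\tfrac12$ is unavailable. Determining the true asymptotics of the sums $\sum n^{-\gamma}e^{2i\theta_n}$ and $\sum n^{-2\gamma}\cos^2 2\theta_n$, and thereby extracting the constant $\Lambda$ uniformly in $E$, is exactly what I would isolate as Lemma \ref{theta-asymptotics} and Proposition \ref{theta-sums}. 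This resonant phase analysis is what forces the eigenfunction asymptotics to be constant over $\bbR$, and it is the step I expect to demand the most care.
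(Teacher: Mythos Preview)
Your proposal is correct and follows essentially the same route as the paper: expand the logarithm of the recursion \eqref{Rn-recursion2}, split the increments into a deterministic drift (producing $\eta_1$), mean-zero linear martingale terms (negligible by the three-series/Kolmogorov argument), and quadratic terms whose conditional means produce $\Lambda$, with the oscillatory phase averages handled by the pairing mechanism of Lemma \ref{theta-asymptotics} and Proposition \ref{theta-sums}. The paper's bookkeeping groups the computation into five explicit limits $\xi_\calZ,\xi_\calW,\xi_{\calZ\calW},\xi_{\calZ\calW_2},\xi_A$ (and invokes \cite[Lemma 8.4]{efgp} in place of your $L^2$/Borel--Cantelli step) rather than your $A_n+B_n+C_n$, and your worry about the fourth harmonic is resolved there by the identity $\sin^4\bar\theta+\tfrac14\sin^2 2\bar\theta=\sin^2\bar\theta$, which makes the $\cos 4\bar\theta$ coefficient subleading.
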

\begin{proof}
As in \cite{efgp} we shall prove the statement by using the recursion relation for $R_\omega(n)^2$
(equation \eqref{Rn-recursion2}). Namely, we shall
prove that
\beq \label{recursion-asymp}
\frac{1}{F_\gamma(n)}
\sum_{j=1}^n \left(
\log \left(\parallel \calZ_\omega(j)+\calW_\omega(j) \parallel^2 \right)-
\log \left( \frac{a_\omega(j)^2}{\ti{a}(j)^2} \right) \right)
\eeq
converges to the appropriate limit, where $F_\gamma(n)=\log(n)$ for
$\gamma \geq \frac{1}{2}$ and $F_\gamma(n)=n^{1-2\gamma}$ otherwise.
From this point on, $a_\omega(n)=a_{\Upsilon,\omega}(n)$ and $b_\omega(n)=b_{\Upsilon,\omega}(n)$.

We shall need some estimate on the behavior of $\theta_\omega(n)$. We start with

\begin{lemma} \label{theta-asymptotics}
For any $\varepsilon>0$, there exists, with probability one, a constant
$\ti{C}=\ti{C}(\omega,\varepsilon)$, such that
\beq \label{theta-decay}
|\theta_{\omega,\phi}(n+1)-\bar{\theta}_{\omega,\phi}(n)| \leq
\ti{C} \max (n^{-\gamma+\varepsilon}, n^{-1}).
\eeq
\end{lemma}
\begin{proof}[Proof of the Lemma]
We start by proving a similar statement for $|\kappa_\omega(n+1)-1|+|\zeta_\omega(n+1)|$:

Recall that $k_n \rightarrow \frac{\pi}{2}$. Thus, for large enough $n$,
$\sin(k_n)>\frac{1}{2}$. Moreover, $\cos(k_n) \sim \frac{1}{n^{\eta_1}}$. It follows that
\beq \no
\begin{split}
|\sin(k_n)-\sin(k_{n+1})|=\frac{|\cos(k_n)^2-\cos(k_{n+1})^2|}{|\sin(k_n)+\sin(k_{n+1})|}
\leq \frac{C'}{n^{1+2\eta_1}}.
\end{split}
\eeq
Thus,
\beq \no
\begin{split}
\left| \kappa_\omega(n+1)-1 \right|+\left| \zeta_\omega(n+1) \right|&\leq
\left| \frac{\sin(k_{n})}{\sin(k_{n+1})} \left( \frac{\ti{a}(n+1)\ti{a}(n)}{a_\omega(n+1)^2}
 -1\right) \right| \\
&\quad+\left| \frac{\sin(k_n)-\sin(k_{n+1})}{\sin(k_{n+1})} \right| \\
&\quad+\left| \cot(k_{n+1})\left(\frac{\ti{a}(n+1)^2}{a_\omega(n+1)^2}-1 \right) \right| \\
&\quad+\left| \frac{\ti{a}(n+1)}{\sin(k_n)a_\omega(n+1)^2}b_\omega(n+1) \right| \\
&=I_1+I_2+I_3+I_4.
\end{split}
\eeq
Now, choosing $\varepsilon<\eta_1-\eta_2$, by Lemma \ref{almost-sure-decay}
\beq \no
\begin{split}
I_1 &\leq 2 \left|\frac{\ti{a}(n+1)\left(\ti{a}(n)-\ti{a}(n+1)\right)}{a_\omega(n+1)^2} \right|
+4\left|\frac{\alpha_\omega(n+1) \ti{a}(n+1)}{a_\omega(n+1)^2} \right| \\
&\quad+2\left|\frac{\alpha_\omega(n+1)^2}{a_\omega(n+1)^2} \right|
\leq C_1(\omega,\varepsilon)\max(n^{-\gamma+\varepsilon}, n^{-1})
\end{split}
\eeq
\beq \no
I_3 \leq 2|\cos(k_{n+1})| \left|\frac{2 \alpha_\omega(n+1)\ti{a}(n+1)+\alpha_\omega(n+1)^2}
{a_\omega(n+1)^2} \right|\leq \frac{C_3(\omega,\varepsilon)}{n^{\gamma-\varepsilon}},
\eeq
and
\beq \no
I_4 \leq 2\left| \frac{\ti{a}(n+1)b_\omega(n+1)}{a_\omega(n+1)^2} \right|
\leq \frac{C_4(\omega,\varepsilon)}{n^{\gamma-\varepsilon}}
\eeq
almost surely.
We have shown above that
\beq \no
I_2 \leq \frac{C_2}{n^{1+\eta_1}}
\eeq
so we see that there exists, with probability one, a constant
$C_5(\omega,\varepsilon)$ for which
\beq \label{C5}
|\kappa_\omega(n+1)-1|+|\zeta_\omega(n+1)|\leq \frac{C_5} \max (n^{-\gamma+\varepsilon}, n^{-1}).
\eeq

Now, use
\beq \no
e^{2i x}=1-\frac{2}{1+i\cot x}
\eeq
and \eqref{recursion-for-theta} to see that, if $|\kappa_\omega(n+1)-1|+2|\zeta_\omega(n+1)|<\frac{1}{2}$,
(which indeed happens almost surely, for large enough $n$), then
\beq \no
\left| e^{2i\theta_{\omega,\phi}(n+1)}-e^{2i\bar{\theta}_{\omega,\phi}(n)} \right| \leq 4
\left(
\left|\kappa_\omega(n+1)-1 \right|+\left|\zeta_\omega(n+1) \right| \right),
\eeq
which implies (by $|e^{ix}-1| \geq \frac{2|x|}{\pi}$) that
\beq \no
|\theta_{\omega,\phi}(n+1)-\bar{\theta}_{\omega,\phi}(n)| \leq \pi \left(
\left|\kappa_\omega(n+1)-1 \right|+\left|\zeta_\omega(n+1) \right| \right).
\eeq
This, together with \eqref{C5}, implies \eqref{theta-decay} and concludes the proof of the
lemma.
\end{proof}

The direct consequence of this is
\begin{proposition} \label{theta-sums}
Assume $f(n)$ is a function that satisfies
\beq \no
f(n)=o \left( n^r \right)
\eeq
and
\beq \no
f(n+1)-f(n)=o \left( n^{r-1} \right),
\eeq
with
\beq \label{r-value}
r=\left \{ \begin{array}{cc}
\gamma-1 & {\rm if } \gamma \geq 1/2 \  {\rm and } \ \eta_2>0 \\
-\gamma  & {\rm if } \gamma < 1/2  \ {\rm and } \ \eta_2>0 \\
\eta_1-1 & {\rm if } \gamma \geq 1/2 \  {\rm and } \ \eta_2 \leq 0 \\
\eta_1-2\gamma & {\rm if } \gamma< 1/2 \  {\rm and } \ \eta_2 \leq 0.
\end{array} \right.
\eeq
Then, for any $\phi$,
\beq \label{theta-sum}
\lim_{n \rightarrow \infty}\frac{1}{F_\gamma(n)} \sum_{j=1}^n
f(j) \cos(2 \theta_{\omega,\phi}(j))=0
\eeq
almost surely.
The same statement holds with $\theta$ replaced by $\bar{\theta}$.
\end{proposition}
\begin{proof}[Proof of the Proposition]
We shall prove the statement for $\theta$. By summation by parts,
\beq \label{abel}
\begin{split}
& \left| \sum_{j=1}^n f(j)\cos(2\theta_{\omega,\phi}(j)) \right| \\
& \quad =\left| f(n)\sum_{j=1}^n \cos(2\theta_{\omega,\phi}(j))-
\sum_{j=1}^{n-1}\sum_{l=1}^j \cos(2\theta_{\omega,\phi}(l)) \left( f(j+1)-f(j) \right) \right|. \\
\end{split}
\eeq
Thus we are led to examine $\sum_{l=1}^j \cos(2\theta_{\omega,\phi}(l))$. Assume that $j=2m$ is
even. Then
\beq \label{even}
\begin{split}
\left| \sum_{l=1}^j \cos(2\theta_{\omega,\phi}(l)) \right|&=
\left| \sum_{l=1}^m \left( \cos(2\theta_{\omega,\phi}(2l))
-\cos(2\theta_{\omega,\phi}(2l-1)+\pi)\right) \right| \\
&\leq \sum_{l=1}^m \left| 2\theta_{\omega,\phi}(2l)
-2\theta_{\omega,\phi}(2l-1)-\pi \right| \\
&\leq 2 \sum_{l=1}^m \left| \theta_{\omega,\phi}(2l)
-\bar{\theta}_{\omega,\phi}(2l-1) \right|+\left| k_{2l-1}-\frac{\pi}{2} \right|\\
&\leq C_\omega \max( j^{1-\gamma+\varepsilon}, j^{1-\eta_1})
\end{split}
\eeq
almost surely, by Lemma \ref{theta-asymptotics} and by
\beq \no
\left| k_n -\frac{\pi}{2} \right| \leq 2|\cos(k_n)|,
\eeq
which holds for sufficiently large $n$.
Thus, for any $j$, we get
\beq \label{odd}
\begin{split}
\left| \sum_{l=1}^j \cos(2\theta_{\omega,\phi}(l)) \right| & \leq
C_\omega \max( j^{1-\gamma+\varepsilon}, j^{1-\eta_1})
+1 \\
& \leq (C_\omega+1) \max ( j^{1-\gamma+\varepsilon}, j^{1-\eta_1}).
\end{split}
\eeq
A simple calculation finishes the proof for $\theta$.
The proof for $\bar{\theta}$ follows the same argument, with an additional $|k_{2l}-k_{2l-1}|$
term in \eqref{even}.
\end{proof}

We abbreviate
\beq \label{Aj}
\begin{split}
A_\omega(j) &= \frac{a_\omega(j)^2-\ti{a}(j)^2}{\ti{a}(j)^2}=
2\frac{\alpha_\omega(j)}{\ti{a}(j)}+\frac{\alpha_\omega(j)^2}{\ti{a}(j)^2} \\
& =2\frac{\lambda_2}{\lambda_1}\frac{Y_\omega(j)}{j^{\gamma}}
+\left( \frac{\lambda_2}{\lambda_1} \right)^2\frac{Y_\omega(j)^2}{j^{2\gamma}},
\end{split}
\eeq
and
\beq \label{Bj}
B_\omega(j)=\frac{b_\omega(j)}{\ti{a}(j)}=\frac{\lambda_2}{\lambda_1}\frac{X_\omega(j)}{j^\gamma}
\eeq
By a straightforward calculation we get
\beq \label{W}
\begin{split}
\parallel \calW_\omega(j) \parallel^2&=
\frac{\sin^2(\bar{\theta}_\omega(j-1))\ti{a}(j)^2}{\sin^2(k_{j-1}) \ti{a}(j-1)^2} \\
&  \quad \times
\Bigg(A_\omega(j)^2 +B_\omega(j)^2+
2\cos(k_j)B_\omega(j)A_\omega(j)  \Bigg),
\end{split}
\eeq
\beq \label{Z}
\begin{split}
\parallel \calZ_\omega(j) \parallel^2 &= 1+\sin^2(\bar{\theta}_\omega(j-1))
\frac{\ti{a}(j)^2-\ti{a}(j-1)^2}{\ti{a}(j-1)^2} \\
& \quad +\frac{\ti{a}(j)^2 \sin^2(\bar{\theta}_\omega(j-1))}{\ti{a}(j-1)^2}
\frac{\sin^2(k_j)-\sin^2(k_{j-1})}{\sin^2(k_{j-1})}
\end{split}
\eeq
and
\beq \label{ZW}
\begin{split}
\left( \calZ_\omega(j),\calW_\omega(j) \right)&=
\frac{\ti{a}(j)}{\ti{a}(j-1)} \\
& \quad \times \Bigg(\frac{A_\omega(j) \sin(\bar{\theta}_\omega(j-1))}{\sin^2(k_{j-1})}
\Big(\frac{\ti{a}(j)}{\ti{a}(j-1)}\sin(\bar{\theta}_\omega(j-1)) \\
&-\cos(k_j)\sin(\theta_\omega(j-1))\Big)
-\frac{B_\omega(j)\sin(2\bar{\theta}_\omega(j-1))}{2\sin(k_{j-1})}
\Bigg)
\end{split}
\eeq
(Recall
$\bar{\theta}_{\omega}(j) \equiv \bar{\theta}_{\omega,\phi}(j)=\theta_{\omega,\phi}(j)+k_j
\equiv \theta_{\omega}(j)+k_j$.)

Let $\varepsilon<\frac{\gamma}{8}$.
By Theorem \ref{almost-sure-decay}, the fact that
$\cos(k_j) \sim \frac{1}{\ti{a}(j)}$ and the identity
\beq \no
|\sin^2(x)-\sin^2(y)|=|\cos^2(x)-\cos^2(y)|,
\eeq
it follows that
$\parallel \calW_\omega(j) \parallel^2=\mathcal{O}_\omega(j^{-2\gamma+2\varepsilon})$,
$\left(\calZ_\omega(j), \calW_\omega(j)\right)=\mathcal{O}_\omega(j^{-\gamma+\varepsilon})$ and
$\left( \parallel \calZ_\omega(j) \parallel^2-1 \right)=\mathcal{O}(j^{-1})$
with probability $1$, where the notation $\mathcal{O}_\omega$ indicates that the implicit constant depends on $\omega$.
Thus, we can use
\beq \no
\log(1+x)=x-\frac{x^2}{2}+\mathcal{O}(x^3),
\eeq
together with the observation that (almost surely)
\beq \no
\begin{split}
&\Big(\parallel \calZ_\omega(j) \parallel^2-1+2(\calZ_\omega(j),\calW_\omega(j))
+\parallel \calW_\omega(j) \parallel^2 \Big)^2 \\
& \quad = 4(\calZ_\omega(j),\calW_\omega(j))^2
+\mathcal{O}_\omega\left( \frac{1}{j^{3\gamma-\varepsilon}} \right),
\end{split}
\eeq
to see that, with probability one, for large enough $j$ we have that
\beq \label{principal}
\begin{split}
& \log \Big(1+ \parallel \calZ_\omega(j) \parallel^2-1+2(\calZ_\omega(j),\calW_\omega(j))
+\parallel \calW_\omega(j) \parallel^2 \Big) \\
&\quad=(\parallel \calZ_\omega(j) \parallel^2-1)+2(\calZ_\omega(j),\calW_\omega(j))
+\parallel \calW_\omega(j) \parallel^2 \\
&\qquad-2(\calZ_\omega(j),\calW_\omega(j))^2+\mathcal{O}_\omega
\left( \frac{1}{j^{3\gamma-\varepsilon}}+\frac{1}{j^{1+\gamma-\varepsilon}} \right).
\end{split}
\eeq
Therefore, since the edition of a finite number of terms is inconsequential, it follows that
\beq \label{sum-for-log1}
\begin{split}
&\lim_{n \rightarrow \infty} \frac{1}{F_\gamma(n)}
\sum_{j=1}^n
\log \left(\parallel \calZ_\omega(j)+\calW_\omega(j) \parallel^2 \right) \\
&\quad=
\lim_{n \rightarrow \infty} \frac{1}{F_\gamma(n)}
\sum_{j=1}^n
\Big( (\parallel \calZ_\omega(j) \parallel^2-1)+2(\calZ_\omega(j),\calW_\omega(j))
+\parallel \calW_\omega(j) \parallel^2 \\
&\qquad-2(\calZ_\omega(j),\calW_\omega(j))^2 \Big)
\end{split}
\eeq
with probability one, in the sense that both limits exist together and are equal if they do.

Similarly,
\beq \label{sum-for-log2}
\begin{split}
\lim_{n \rightarrow \infty}\frac{-1}{F_\gamma(n)}
\sum_{j=1}^n
\log \left( \frac{a_\omega(j)^2}{\ti{a}(j)^2} \right)&=
\lim_{n \rightarrow \infty}\frac{-1}{F_\gamma(n)}
\sum_{j=1}^n\log \left(1+A_\omega(j)\right) \\
&=
\lim_{n \rightarrow \infty}\frac{-1}{F_\gamma(n)}
\sum_{j=1}^n \left( A_\omega(j)-\frac{A_\omega(j)^2}{2} \right).
\end{split}
\eeq

Thus, our problem is reduced to computing the limits:
\beq \no
\xi_{\calZ} \equiv \lim_{n \rightarrow \infty} \frac{1}{F_\gamma(n)}\sum_{j=1}^n
\left( \parallel \calZ_\omega(j) \parallel^2-1 \right)
\eeq
\beq \no
\xi_{\calW} \equiv \lim_{n \rightarrow \infty} \frac{1}{F_\gamma(n)}\sum_{j=1}^n
\parallel \calW_\omega(j) \parallel^2
\eeq
\beq \no
\xi_{\calZ \calW} \equiv \lim_{n \rightarrow \infty} \frac{2}{F_\gamma(n)}\sum_{j=1}^n
\left(\calZ_\omega(j),\calW_\omega(j) \right)
\eeq
\beq \no
\xi_{\calZ \calW_2} \equiv \lim_{n \rightarrow \infty}  \frac{-2}{F_\gamma(n)}\sum_{j=1}^n
\left(\calZ_\omega(j),\calW_\omega(j) \right)^2
\eeq
\beq \no
\xi_{A} \equiv \lim_{n \rightarrow \infty} \frac{-1}{F_\gamma(n)}\sum_{j=1}^n
\left(A_\omega(j)-\frac{A_\omega(j)^2}{2} \right).
\eeq
By \eqref{Z},
\beq \no
\begin{split}
\parallel \calZ_\omega(j) \parallel^2-1 &=
\sin^2(\bar{\theta}_\omega(j-1))
\frac{\ti{a}(j)^2-\ti{a}(j-1)^2}{\ti{a}(j-1)^2} \\
& \quad +\frac{\ti{a}(j)^2
\sin^2(\bar{\theta}_\omega(j-1))}{\ti{a}(j-1)^2}
\frac{\sin^2(k_j)-\sin^2(k_{j-1})}{\sin^2(k_{j-1})}.
\end{split}
\eeq
The last term on the right is absolutely summable ($=\mathcal{O}(n^{-1-2\eta_1})$) so we only need to look at
$\sin^2(\bar{\theta}_\omega(j-1)) \frac{\ti{a}(j)^2-\ti{a}(j-1)^2}{\ti{a}(j-1)^2}$. But, by
$\sin^2(\alpha)=\frac{1}{2}-\frac{\cos(2\alpha)}{2}$ and by Proposition \ref{theta-sums}, we see that (recall
$\ti{a}(j)=\lambda_1j^{-\eta_1}$)
\beq \label{xiZ}
\begin{split}
\xi_{\calZ}&=\frac{1}{2}
\lim_{n \rightarrow \infty} \frac{1}{F_\gamma(n)}\sum_{j=1}^n\frac{\ti{a}(j)^2-\ti{a}(j-1)^2}{\ti{a}(j-1)^2} \\
&=
\frac{1}{2}\lim_{n \rightarrow \infty} \frac{1}{F_\gamma(n)}\sum_{j=1}^n\frac{2\eta_1}{j}=
\left \{ \begin{array}{cc}
\eta_1 & {\rm if} \ \gamma \geq 1/2 \\
0 & {\rm otherwise.} \end{array} \right.
\end{split}
\eeq
For the other four limits, we shall use extensively Lemma 8.4 of \cite{efgp}, in order to replace $A_\omega$ and $B_\omega$ by their
means. For example, write
\beq \label{W1}
\begin{split}
\parallel \calW_\omega(j) \parallel^2 &=
\frac{\sin^2(\bar{\theta}_\omega(j-1))\ti{a}(j)^2}{\sin^2(k_{j-1})
\ti{a}(j-1)^2}
\Bigg(A_\omega(j)^2-\mean{A_\omega(j)^2} \Bigg) \\
& \quad+ \frac{\sin^2(\bar{\theta}_\omega(j-1))\ti{a}(j)^2}{\sin^2(k_{j-1})
\ti{a}(j-1)^2}
\Bigg(B_\omega(j)^2-\mean{B_\omega(j)^2}\Bigg) \\
&\quad+\frac{\sin^2(\bar{\theta}_\omega(j-1))\ti{a}(j)^2}{\sin^2(k_{j-1})
\ti{a}(j-1)^2}
\Bigg(2\cos(k_j)B_\omega(j)A_\omega(j)  \Bigg) \\
&\quad +\frac{\sin^2(\bar{\theta}_\omega(j-1))\ti{a}(j)^2}{\sin^2(k_{j-1})
\ti{a}(j-1)^2}
\left( \mean{A_\omega(j)^2} +\mean{B_\omega(j)^2} \right).
\end{split}
\eeq
Then the first three terms have mean zero and so, by Lemma 8.4 of \cite{efgp} we get that
\beq \no
\xi_{\calW} \equiv \lim_{n \rightarrow \infty} \frac{1}{F_\gamma(n)}\sum_{j=1}^\infty
\frac{\sin^2(\bar{\theta}_\omega(j-1))\ti{a}(j)^2}{\sin^2(k_{j-1})
\ti{a}(j-1)^2}
\left( \mean{A_\omega(j)^2} +\mean{B_\omega(j)^2} \right).
\eeq
Expanding $\mean{A_\omega(j)^2}$, throwing out terms that are $o(j^{-2\gamma})$, and applying Proposition \ref{theta-sums}
(writing, again, $\sin^2(\alpha)=\frac{1}{2}-\frac{\cos (2\alpha)}{2}$), we get
\beq \label{xiW1}
\begin{split}
\gamma_{\calW} &=\lim_{n \rightarrow \infty}\frac{1}{F_\gamma(n)} \sum_{j=1}^n
\frac{j^{2\eta_1}}{(j-1)^{2\eta_1}\sin^2(k_{j-1})}\Lambda j^{-2\gamma} \\
& \quad \times
\left(\mean{4Y_\omega(j)^2}+
\mean{X_\omega(j)^2}\right) \\
&=\lim_{n \rightarrow \infty}\frac{1}{F_\gamma(n)} \sum_{j=1}^n
2\Lambda j^{-2\gamma}= \left \{
\begin{array}{cc}
0 & {\rm if} \ \gamma>1/2 \\
2\Lambda & {\rm otherwise,}
\end{array} \right.
\end{split}
\eeq
(recall $\mean{Y_\omega(j)^2}=\frac{1}{4}$).
Applying the same procedure to $\xi_{\calZ \calW}$ and $\xi_{A}$ we get
\beq \label{xiZW}
\begin{split}
\xi_{\calZ \calW}&=\lim_{n \rightarrow \infty} \frac{2}{F_\gamma(n)} \sum_{j=1}^n
\frac{j^{2\eta_1}}{(j-1)^{2\eta_1}}\frac{\sin^2(\bar{\theta}_\omega(j-1))}{\sin^2(k_{j-1})}
\mean{2\Lambda\frac{Y_\omega(j)^2}{j^{2\gamma}}} \\
&=\lim_{n \rightarrow \infty} \frac{1}{F_\gamma(n)} \sum_{j=1}^n\frac{\Lambda}{2}
\frac{1}{j^{2\gamma}}=\left \{ \begin{array}{cc}
0 & {\rm if} \ \gamma>1/2 \\
\frac{\Lambda}{2} & {\rm otherwise}
\end{array} \right.
\end{split}
\eeq
and
\beq \label{xiA}
\begin{split}
\xi_{A}&=\lim_{n \rightarrow \infty} \frac{-1}{F_\gamma(n)}\sum_{j=1}^n
\mean{2 \Lambda \frac{Y_\omega(j)^2}{j^{2\gamma}}} \\
&=\lim_{n \rightarrow \infty} \frac{1}{F_\gamma(n)}\sum_{j=1}^n
\left(\frac{\Lambda}{2} \frac{1}{j^{2\gamma}}\right)
=\left \{ \begin{array}{cc}
0 & {\rm if} \ \gamma>1/2 \\
\frac{\Lambda}{2} & {\rm otherwise.}
\end{array} \right.
\end{split}
\eeq
The computation of $\xi_{\calZ \calW_2}$ involves $\sin^4(\bar{\theta}_\omega)=\frac{3}{8}-
\frac{\cos(2\bar{\theta}_\omega)}{2}+\frac{\cos(4 \bar{\theta}_\omega)}{8}$, for which Proposition
\ref{theta-sums} is useless. Luckily, the $\cos(4\bar{\theta}_\omega(j))$ cancels out. As before
\beq \no
\begin{split}
\xi_{\calZ \calW_2}&=\lim_{n \rightarrow \infty}  \frac{-2}{F_\gamma(n)}\sum_{j=1}^n
\frac{2\Lambda}{j^{2\gamma}} \\
&\quad \times \Bigg(\frac{\mean{4Y_\omega(j)^2} \sin^4(\bar{\theta}_\omega(j-1))}{\sin^2(k_{j-1})}
\frac{j^{2\eta_1}}{(j-1)^{2\eta_1}} \\
&\quad+\frac{\mean{X_\omega(j)^2}\sin^2(2\bar{\theta}_\omega(j-1))}{4}
\Bigg)
\end{split}
\eeq
Now write
\beq \no
\begin{split}
&\Bigg(\frac{\mean{4Y_\omega(j)^2} \sin^4(\bar{\theta}_\omega(j-1))}{\sin^2(k_{j-1})}
\frac{j^{2\eta_1}}{(j-1)^{2\eta_1}}
+\frac{\mean{X_\omega(j)^2}\sin^2(2\bar{\theta}_\omega(j-1))}{4}
\Bigg) \\
&\quad =\Bigg(\frac{\sin^4(\bar{\theta}_\omega(j-1))}{\sin^2(k_{j-1})}
\frac{j^{2\eta_1}}{(j-1)^{2\eta_1}}
+\frac{\sin^2(2\bar{\theta}_\omega(j-1))}{4}
\Bigg) \\
&\quad=\Bigg(\frac{3j^{2\eta_1}}{8(j-1)^{2\eta_1}\sin^2(k_{j-1})}+\frac{1}{8}\Bigg) \\
& \qquad +\frac{j^{2\eta_1}}{2(j-1)^{2\eta_1}\sin^2(k_{j-1})}\cos(2 \bar{\theta}_\omega(j-1)) \\
&\qquad+\cos(4 \bar{\theta}_\omega(j-1))\Bigg(\frac{j^{2\eta_1}}{(j-1)^{2\eta_1}\sin^2(k_{j-1})}-1 \Bigg)\\
&\quad=\Bigg(\frac{3j^{2\eta_1}}{8(j-1)^{2\eta_1}\sin^2(k_{j-1})}+\frac{1}{8}\Bigg) \\
&\qquad+\frac{j^{2\eta_1}}{2(j-1)^{2\eta_1}\sin^2(k_{j-1})}\cos(2 \bar{\theta}_\omega(j-1)) \\
&\qquad+\cos(4 \bar{\theta}_\omega(j-1))\Bigg(\mathcal{O}(n^{-1})+\mathcal{O}(n^{-2\eta_1})\Bigg).\\
\end{split}
\eeq
to see that
\beq \label{xiZW2}
\xi_{\calZ \calW_2}=\left \{ \begin{array}{cc}
0 & {\rm if} \ \gamma> 1/2 \\
-2\Lambda & {\rm otherwise,}
\end{array} \right.
\eeq
where the $\cos(2 \bar{\theta}_\omega)$ term vanishes by Proposition \ref{theta-sums}.
Summing up the various limits, the proposition is proved.
\end{proof}

\begin{proof}[Proof of Theorem \ref{transfer-asymp}]
By \eqref{compare-Rn-transfer}, the theorem follows from Proposition \ref{transfer-asymp1}.
\end{proof}

Theorem \ref{growing-weights} almost follows immediately from Theorem \ref{transfer-asymp} and Proposition
\ref{transfer-asymp1}. As in the Schr\"odinger situation, the case $\gamma=\frac{1}{2}$ requires some subtle reasoning.
We have established that, in this case, with probability one, equation \eqref{ev} has a solution, $\psi$, with
\beq \label{grow-asymp}
|\psi(n)|^2 \asymp n^{\Lambda-\eta_1}.
\eeq
In order to use subordinacy theory, we need the existence of another solution with faster
decay at infinity.
The following is Lemma 8.7 of \cite{efgp}, formulated for general regular matrices:

\begin{lemma} \label{osceledec-style}
Let $u_\phi=(\cos \phi, \sin \phi) \in \bbR^2$. For any matrix, $A \in GL_2(\bbR)$ with $\det(A)=d>0$, let $\phi(A)$ be the unique
$\phi \in (-\frac{\pi}{2}, \frac{\pi}{2}]$ with $\frac{1}{\sqrt{d}} \parallel Au_\phi \parallel = \sqrt{d} \parallel A
\parallel^{-1}$. Define $\rho(A)=\frac{\parallel Au_0 \parallel }{\parallel Au_{\pi/2}\parallel}$.

Let $A_n$ be a sequence of matrices in $GL_2(\bbR)$ with $\det(A_n)=d_n>0$, that satisfy
$\frac{1}{\sqrt{d_n}}\parallel A_n \parallel \rightarrow \infty$ and
$d_n \frac{\parallel A_{n+1} A_n^{-1} \parallel}{\parallel A_n \parallel \parallel A_{n+1} \parallel} \rightarrow 0$ as
$n \rightarrow \infty$. Let $\rho_n=\rho(A_n)$ and $\phi_n=\phi(A_n)$.
Then:
\begin{enumerate}
\item $\phi_n$ has a limit $\phi_\infty$ if and only if $\lim_{n \rightarrow \infty}\rho_n=\rho_\infty$ exists ($\rho_\infty=\infty$
is allowed, but then we only have $\vert \phi_n \vert \rightarrow \frac{\pi}{2})$.
\item Suppose $\phi_n$ has a limit $\phi_\infty \neq 0,\frac{\pi}{2}$ (equivalently, $\rho_\infty \neq 0, \infty$). Then
\beq \no
\lim_{n \rightarrow \infty}\frac{\log \parallel A_n u_\infty \parallel-\log \sqrt{d_n}}{\log \parallel A_n \parallel-\log
\sqrt{d_n}}=-1
\eeq
if and only if
\beq \no
\limsup_{n \rightarrow \infty}\frac{\log \vert \rho_n-\rho_\infty \vert}{\log \parallel A_n \parallel -\log \sqrt{d_n}}\leq -2.
\eeq
\end{enumerate}
\end{lemma}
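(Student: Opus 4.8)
The plan is to reduce the statement to the determinant-one case, which is exactly Lemma 8.7 of \cite{efgp}, and then to recall the geometric mechanism behind that lemma. First I would rescale, setting $\hat A_n = A_n/\sqrt{d_n}$, so that $\det \hat A_n = 1$ and $\|\hat A_n\| = \|A_n\|/\sqrt{d_n} \to \infty$ by the first hypothesis. The quantities in the statement are all insensitive to this rescaling: $\phi(A_n)=\phi(\hat A_n)$ and $\rho(A_n)=\rho(\hat A_n)$, since a positive scalar cancels in both defining relations, while $\log\|A_n u\| - \log\sqrt{d_n} = \log\|\hat A_n u\|$ for every unit vector $u$. A short computation shows that the second hypothesis becomes $\|\hat A_{n+1}\hat A_n^{-1}\|/(\|\hat A_n\|\,\|\hat A_{n+1}\|)\to 0$, the factors of $d_n$ cancelling exactly. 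Thus both parts of the lemma are equivalent to their $SL_2(\bbR)$ versions, and the remaining task is to track the underlying geometry.

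For the geometry I would work with the singular value decomposition of $\hat A_n$. For large $n$ the singular values are distinct (since $\|\hat A_n\|\to\infty$ and $\det\hat A_n=1$ force the smaller one to be $\|\hat A_n\|^{-1}$), so $\phi_n=\phi(\hat A_n)$ is the well-defined angle of the most contracted direction, and for any $\phi$
\beq \no
\|\hat A_n u_\phi\|^2 = \|\hat A_n\|^{-2}\cos^2(\phi-\phi_n)+\|\hat A_n\|^2\sin^2(\phi-\phi_n).
\eeq
Taking $\phi=0$ and $\phi=\pi/2$ and dividing gives $\rho_n^2\to\tan^2\phi_\infty$ whenever $\phi_n\to\phi_\infty$, so $\rho_\infty=|\tan\phi_\infty|$; this monotone correspondence, together with the slow variation of $\phi_n$ guaranteed by the step hypothesis, yields part (1) (with $\phi_\infty=\pm\pi/2$ corresponding to $\rho_\infty=\infty$).

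For part (2), with $\phi_\infty\neq0,\pi/2$, I would expand both $\rho_n-\rho_\infty$ and $\|\hat A_n u_\infty\|$ in the small deviation $\phi_n-\phi_\infty$. The norm formula at $\phi=\phi_\infty$ shows that the displayed ratio tends to $-1$ exactly when the expanding contribution $\|\hat A_n\|^2\sin^2(\phi_\infty-\phi_n)$ is logarithmically negligible against $\|\hat A_n\|^{-2}$, i.e.\ when $\limsup \log|\phi_n-\phi_\infty|/\log\|\hat A_n\| \leq -2$. A direct computation from the $\rho_n^2$ formula gives
\beq \no
\rho_n-\rho_\infty = C(\phi_n-\phi_\infty)+\Oh(\|\hat A_n\|^{-4})+\Oh((\phi_n-\phi_\infty)^2), \qquad C\neq0,
\eeq
and near the threshold $|\phi_n-\phi_\infty|\sim\|\hat A_n\|^{-2}$ the linear term dominates the remaining error terms. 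Hence the condition on $\phi_n-\phi_\infty$ and the stated condition on $\rho_n-\rho_\infty$ have the same $\limsup$ and cross the level $-2$ together, which is the asserted equivalence.

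I expect the genuine difficulty to lie not in the rescaling bookkeeping---which is routine---but in two places: first, in establishing part (1) cleanly in the degenerate cases $\rho_\infty\in\{0,\infty\}$, where one must use the step hypothesis to rule out oscillation of the contracted direction $\phi_n$; and second, in making the rate-matching of the previous paragraph uniform, i.e.\ verifying that the quadratic and $\|\hat A_n\|^{-4}$ error terms never interfere with the comparison precisely at the critical exponent $-2$. Both points are handled in the proof of Lemma 8.7 of \cite{efgp}, and since the rescaling above leaves all the relevant quantities invariant, that proof applies verbatim here.
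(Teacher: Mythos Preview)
Your proposal is correct and matches the paper's approach exactly: the paper does not prove this lemma at all but simply states it as ``Lemma 8.7 of \cite{efgp}, formulated for general regular matrices,'' i.e.\ the reduction to the $SL_2(\bbR)$ case via the rescaling $\hat A_n=A_n/\sqrt{d_n}$ is precisely what is implicit in that phrase. Your explicit verification that all the relevant quantities ($\phi$, $\rho$, the logarithmic ratios, and the step hypothesis) are invariant under this rescaling is more than the paper provides.
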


\begin{proposition} \label{decaying-sol}
Let $J_{\Upsilon,\omega}$ be the family of random Jacobi matrices described
in Theorem \ref{transfer-asymp}, with $\gamma=\frac{1}{2}$.
Then, for any $E \in \bbR$, there exists, with probability one, an initial condition
$\Psi_{\phi(\omega)}=\left( \begin{array}{c} \cos(\phi(\omega)) \\ \sin(\phi(\omega)) \end{array} \right)$ such that
\beq \label{decay-asymp}
\lim_{n \rightarrow \infty}\frac{\log \parallel T_\omega^E(n)\Psi_{\phi(\omega)}\parallel}{\log n}
=-\frac{1}{2}\left(\Lambda+\eta_1 \right).
\eeq
\end{proposition}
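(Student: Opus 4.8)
The plan is to apply the Oseledec-type dichotomy of Lemma \ref{osceledec-style} to the $n$-step transfer matrices $A_n = T_\omega^E(n)$. First I would record the determinant: since $\det S_\omega^E(j) = a_\omega(j-1)/a_\omega(j)$ and $a_\omega(0)=1$, the product telescopes to $d_n := \det A_n = 1/a_\omega(n) \sim (\lambda_1 n^{\eta_1})^{-1}>0$, so $\log\sqrt{d_n} = -\tfrac{\eta_1}{2}\log n + O(1)$. By Theorem \ref{transfer-asymp} in the critical case, $\log\|A_n\| = \tfrac{\Lambda-\eta_1}{2}\log n + o(\log n)$, whence $\|A_n\|/\sqrt{d_n}\asymp n^{\Lambda/2}\to\infty$ (using $\Lambda>0$); and since $A_{n+1}A_n^{-1}=S_\omega^E(n+1)$ has bounded norm (its entries $(E-b_\omega(n+1))/a_\omega(n+1)$ and $-a_\omega(n)/a_\omega(n+1)$ stay bounded), we get $d_n\|A_{n+1}A_n^{-1}\|/(\|A_n\|\|A_{n+1}\|)\asymp n^{-\Lambda}\to 0$. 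Thus both hypotheses hold. The arithmetic then works out: if the conclusion $\lim(\log\|A_n u_\infty\|-\log\sqrt{d_n})/(\log\|A_n\|-\log\sqrt{d_n})=-1$ of part (2) holds with $u_\infty=\Psi_{\phi(\omega)}$, then $\log\|A_n u_\infty\|\sim 2\log\sqrt{d_n}-\log\|A_n\| = \bigl(-\eta_1-\tfrac{\Lambda-\eta_1}{2}\bigr)\log n = -\tfrac12(\Lambda+\eta_1)\log n$, which is exactly \eqref{decay-asymp}.

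It therefore remains to verify the two inputs to part (2): (A) that $\rho_n:=\rho(A_n)=\|A_n u_0\|/\|A_n u_{\pi/2}\|$ converges to some $\rho_\infty\in(0,\infty)$, and (B) that $\limsup_n \log|\rho_n-\rho_\infty|/(\log\|A_n\|-\log\sqrt{d_n})\le -2$. (By part (1), convergence of $\rho_n$ supplies a limit $\phi_\infty$ for $\phi_n$, and $\rho_\infty\in(0,\infty)$ is what forces $\phi_\infty\neq 0,\tfrac\pi2$; if needed I would replace $u_0,u_{\pi/2}$ by two reference angles $\phi_1\neq\phi_2$ avoiding the contracting direction, as two angles are chosen throughout Section~2.) Since $\|A_n u_\phi\|^2=\psi_{\omega,\phi}(n)^2+\psi_{\omega,\phi}(n+1)^2$, the relation \eqref{EFGP-compare} lets me write $\log\rho_n^2 = \log R_{\omega,0}(n)^2-\log R_{\omega,\pi/2}(n)^2 + (\mathrm{correction})$, where the $\ti{a}(n)^2$ factors cancel and the correction is the $\phi$-difference of the two terms $\log(1+O(n^{-\gamma+\varepsilon}))$ coming from the right-hand side of \eqref{EFGP-compare}. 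Because the limit in Proposition \ref{transfer-asymp1} is independent of $\phi$, the leading $(\Lambda+\eta_1)\log n$ drift cancels, and the residual increment-difference series from \eqref{Rn-recursion2} (its $\ti{a}(j)^2/a_\omega(j)^2$ prefactor is $\phi$-independent and drops out) is what must be controlled. I would establish (A) by splitting each increment difference of $\log\|\calZ_\omega+\calW_\omega\|^2$ into mean-zero random pieces, summable by Lemma~8.4 of \cite{efgp}, and deterministic-coefficient oscillatory pieces of size $O(j^{-1})$ times $\cos(2\bar\theta_{\omega,\phi}(j))$, summed by the summation-by-parts bound \eqref{odd}; the tail of the resulting series bounds $|\log\rho_n^2-\log\rho_\infty^2|$.

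The hard part is (B): pushing the tail down to the critical rate $n^{-\Lambda+\varepsilon}$, precisely the threshold the $\le-2$ criterion demands, since the denominator is $\tfrac\Lambda2\log n$. The obstruction is exactly the phenomenon flagged in the Introduction: $k_n\to\tfrac\pi2$ weakens the oscillation cancellation, so the best available bound on $\sum_{l\le j}\cos(2\theta_{\omega,\phi}(l))$ is only $O(j^{\max(1/2,\,1-\eta_1)+\varepsilon})$, and a naive tail estimate stalls at $n^{-1/2}$, as does the correction term, which is $O(n^{-\gamma+\varepsilon})=O(n^{-1/2+\varepsilon})$. The resolution I would pursue is that these slowly-decaying contributions are \emph{common} to both reference angles: because $\psi_{\omega,0}$ and $\psi_{\omega,\pi/2}$ are each dominated by the same growing solution, their Pr\"ufer phases synchronize, $\bar\theta_{\omega,0}(n)-\bar\theta_{\omega,\pi/2}(n)\to 0$, so the dangerous $O(n^{-1/2})$ terms cancel in the \emph{difference} of increments up to a factor of this phase gap. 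Quantifying the synchronization — showing that the phase gap contracts at the rate set by the singular-value ratio $\|A_n\|^2/d_n\asymp n^{\Lambda}$ supplied by Theorem \ref{transfer-asymp} (so this is an input, not a circularity) — is the crux, and it is what upgrades the convergence (A) into the sharp rate (B). Feeding (A) and (B) into Lemma \ref{osceledec-style}(2) then yields \eqref{decay-asymp}.
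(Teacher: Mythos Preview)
Your strategy matches the paper's exactly: apply Lemma~\ref{osceledec-style} to $A_n=T_\omega^E(n)$, reduce to controlling $\rho_n$ and its tail, and exploit the synchronization $\theta_{\omega,0}(n)-\theta_{\omega,\pi/2}(n)\to 0$ to cancel the slowly decaying pieces in the increment differences. You have correctly identified both the obstacle (the naive $n^{-1/2}$ tail is too slow) and the cure (the phase gap decays like $n^{-\Lambda}$), and your arithmetic checking the hypotheses and reading off \eqref{decay-asymp} is right.

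The one step you call ``the crux'' but do not actually carry out is the quantitative phase-gap bound, and the paper does this in one line rather than by a separate dynamical argument: from \eqref{EFGP1}--\eqref{EFGP2} one has the Wronskian-type identity
\[
R_{\omega,0}(n)\,R_{\omega,\pi/2}(n)\,\sin\!\bigl(\theta_{\omega,\pi/2}(n)-\theta_{\omega,0}(n)\bigr)=\ti a(n)\sin k_n,
\]
and since Proposition~\ref{transfer-asymp1} gives $\log R_{\omega,\phi}(n)\sim\tfrac12(\Lambda+\eta_1)\log n$ for \emph{both} $\phi=0,\pi/2$, this immediately yields $|\theta_{\omega,0}(n)-\theta_{\omega,\pi/2}(n)|\asymp n^{-\Lambda}$. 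With that in hand, every trigonometric difference in the Taylor expansion of $L_\omega(n)=\log\rho_{n+1}-\log\rho_n$ is $O(n^{-\Lambda+\varepsilon})$, and the random coefficients $\Delta^i_\omega(n)$ (mean zero, variance $O(n^{-1})$) are handled not via Lemma~8.4 of \cite{efgp} but by Kolmogorov's inequality on dyadic blocks plus Borel--Cantelli, giving directly the tail bound $|\sum_{n\ge N}L_\omega(n)|\le C_\omega N^{-\Lambda+\varepsilon}$ that feeds into part~(2) of Lemma~\ref{osceledec-style}. So your outline is sound; the Wronskian identity is the missing concrete ingredient that turns the heuristic ``phase gap $\sim$ singular-value ratio'' into an actual estimate.
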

\begin{proof}
We imitate the proof of \cite[Lemma 8.8]{efgp}. By Proposition \ref{transfer-asymp1}
\beq \no
\lim_{n \rightarrow \infty} \frac{\log \vert R_{\omega,0}(n) \vert}{\log n}=
\frac{1}{2}\left(\Lambda+\eta_1 \right)
\eeq
and
\beq \no
\lim_{n \rightarrow \infty} \frac{\log \vert R_{\omega, \pi/2}(n) \vert}{\log n}=
\frac{1}{2}\left(\Lambda+\eta_1 \right)
\eeq
for almost every $\omega$.
By \eqref{EFGP1}-\eqref{EFGP2},
\beq \no
R_{\omega,0}(n)R_{\omega,\pi/2}(n)\sin(\theta_{\omega,\pi/2}(n)-\theta_{\omega,0}(n))=\ti{a}(n) \sin k_n
\eeq
so that, for almost every $\omega$,
\beq \label{theta-asymp}
\lim_{n \rightarrow \infty} \frac{\log |\theta_{\omega,0}(n)-\theta_{\omega,\pi/2}(n)|}{\log n}=
-\Lambda
\eeq
(recall $\ti{a}(n)=\lambda_1 n^{\eta_1}$).

Let $\rho_\omega(n)=\frac{R_{\omega,0}(n)}{R_{\omega,\pi/2}(n)}$. Then
\beq \label{L-def}
L_\omega(n) \equiv \log \rho_\omega(n+1)-\log \rho_\omega(n)=\log(1+\mathcal{X}_{\omega,0}(n))-\log(1+\mathcal{X}_{\omega,\pi/2}(n))
\eeq
where
\beq \label{X-def}
\mathcal{X}_{\omega,\phi}(n)=\left( \parallel \calZ_{\omega,\phi}(n+1)+\calW_{\omega,\phi}(n+1) \parallel^2-1 \right).
\eeq
Since $\mathcal{X}_{\omega,\phi}(j)=\mathcal{O} \left(\frac{1}{\sqrt{j}} \right)$ almost surely, we may apply a finite Taylor
expansion to the above (using \eqref{W},\eqref{Z} and \eqref{ZW}) to see that, with probability one, for large enough
$n$
\beq \no
\begin{split}
L_\omega(n)&=
\Delta^1_\omega(n) \left(\sin^2(\bar{\theta}_{\omega,0}(n-1))-\sin^2(\bar{\theta}_{\omega,\pi/2}(n-1)) \right) \\
&\quad+\Delta^2_\omega(n) \left(\sin(2\bar{\theta}_{\omega,0}(n-1))-\sin(2\bar{\theta}_{\omega,\pi/2}(n-1)) \right) \\
&\quad+\Delta^3_\omega(n)
\Big( \sin(\bar{\theta}_{\omega,0}(n-1))\sin(\theta_{\omega,0}(n-1)) \\
&\quad-\sin(\bar{\theta}_{\omega,\pi/2}(n-1))\sin(\theta_{\omega,\pi/2}(n-1))  \Big) \\
&\quad +\mathcal{O}\left( n^{-\left(1+\Lambda-\varepsilon \right)}\right)
\end{split}
\eeq
where
\beq \no
\mean{\Delta^1_\omega(n)} = \mean{\Delta^2_\omega(n)} = \mean{\Delta^3_\omega(n)} = 0
\eeq
and
\beq \no
\mean { \left( \Delta^i_\omega(n) \right)^2} \leq \frac{C_j}{n}
\eeq
$i=1,2,3$. (The $\mathcal{O}\left( n^{-\left(1+\Lambda-\varepsilon \right)}\right)$ for the
remainder follows from Lemma \ref{almost-sure-decay} and \eqref{theta-asymp}.)

A standard application of Kolmogorov's inequality and the Borel Cantelli Lemma shows that with probability one, for large
enough $k$, and for each $i=1,2,3$
\beq \label{cond-delta}
\sup_{m=2^{k-1}+1,\ldots,2^{k}} \left \vert \sum_{j=2^{k-1}+1}^m \Delta^i_\omega(j) \right \vert \leq k
\eeq
and also
\beq \label{cond-delta1}
\sup_{m=2^{k-1}+1,\ldots,2^{k}-1} \left \vert \sum_{m}^{2^k} \Delta^i_\omega(j) \right \vert \leq k.
\eeq
Combining this with the fact that, with probability one, for large enough n,
\beq \no
\left(\sin^2(\bar{\theta}_{\omega,0}(n-1))-\sin^2(\bar{\theta}_{\omega,\pi/2}(n-1)) \right)
= \mathcal{O}\left(n^{-\Lambda+\varepsilon} \right),
\eeq
\beq \no
\left(\sin(2\bar{\theta}_{\omega,0}(n-1))-\sin(2\bar{\theta}_{\omega,\pi/2}(n-1)) \right)
=\mathcal{O}\left(n^{-\Lambda+\varepsilon} \right)
\eeq
and
\beq \no
\begin{split}
&\Big( \sin(\bar{\theta}_{\omega,0}(n-1))\sin(\theta_{\omega,0}(n-1)) \\
&\quad-\sin(\bar{\theta}_{\omega,\pi/2}(n-1))\sin(\theta_{\omega,\pi/2}(n-1))  \Big)
=
\mathcal{O}\left(n^{-\Lambda+\varepsilon} \right),
\end{split}
\eeq
it follows that
\beq \no
\sum_{n=1}^\infty L_\omega(n)
\eeq
exists and
\beq \no
\left \vert \sum_{n=N}^\infty L_\omega(n) \right \vert \leq C_\omega N^{-\Lambda+\varepsilon}
\eeq
almost surely.

Thus, with proability one,
$\lim_{n \rightarrow \infty}\frac{R_{\omega,0}(n)}{R_{\omega,\pi/2}(n)}=\lim_{n \rightarrow \infty}
\rho_\omega(n)=\rho_\omega(\infty)$ exists and
\beq \no
\limsup_{n \rightarrow \infty}\frac{\log \vert\rho_\omega(n)-\rho_\omega(\infty) \vert}{\log n} \leq -
\Lambda.
\eeq
Lemma \ref{osceledec-style} completes the proof (note that $d_n \equiv \det T_\omega^E(n)=\frac{1}{a_\omega(n)}$).
\end{proof}

We are now ready to complete the
\begin{proof}[Proof of Theorem \ref{growing-weights}]
\begin{enumerate}
\item By Theorem \ref{transfer-asymp}, Fubini's Theorem, the fact that the distribution of $X_\omega(n)$ is absolutely continuous with
respect to Lebesgue measure, and the theory of rank-one perturbations (\cite{rankone}), it follows that, with probability
one, the spectral measure is supported on the set of energies where, for any $\varepsilon>0$ and sufficiently large $n$,
$\parallel T^E_\omega(n) \parallel^2 \leq n^{-\eta_1+\varepsilon}$. From Corollary 4.4 of \cite{jit-last} it follows now
that the spectral measure is continuous with respect to $(1-\varepsilon)$-dimensional Hausdorff measure, for any
$\varepsilon>0$. Thus the spectral measure is one-dimensional. Since $\psi(n)=\ip{\delta_1}{\left( J-z \right)^{-1} \delta_n}$ 
solves the eigenvalue equation for $z$ (away from $n=0$), Theorem \ref{combes-thomas} and Wronskian conservation imply that 
if $z \in \bbR$ were to be outside of the spectrum, the transfer matrices would have to exhibit exponential growth. 
Since this is not the case, it follows that the spectrum is $\bbR$.
\item In this case again, the fact that the spectrum is $\bbR$ follows from the polynomial bound on the transfer matrices
in Theorem \ref{transfer-asymp} and Theorem \ref{combes-thomas} below.
As for the properties of the spectral measure, these follow from Theorem 1.2 in \cite{jit-last} using
\eqref{grow-asymp}, Proposition \ref{decaying-sol} and the theory of rank one perturbations.
\item The existence, with probability one, of an exponentially decaying eigenfunction, for every $E \in \bbR$, follows
from Theorem \ref{transfer-asymp} and Theorem 8.3 of \cite{last-simon}. Fubini and the theory of rank one perturbations
imply that the spectral measure is supported, with probability one, on the set where these eigenfunctions exist.
Comparing powers of $n$ in the exponent, 
Theorem \ref{combes-thomas} implies that, as long as $\eta_1>2\eta_2$, the spectrum fills  $\bbR$.
\end{enumerate}
\end{proof}


\appendix
\section{A Combes-Thomas Estimate for Jacobi Matrices with Unbounded Parameters}

This section presents a Combes-Thomas estimate, suitable for application to Jacobi matrices with unbounded
off-diagonal terms. Also see \cite{sahbani} for a related result.

\begin{theorem} \label{combes-thomas}
Let $J=\Jab$ be a self-adjoint Jacobi matrix such that $0<a(n) \leq f(n)$ for a nondecreasing function $f(n)$.
Let $z \in \bbC$ be such that ${\rm dist}(z, Spec(J))=\sigma$ (where $Spec(J)$ is the spectrum of $J$).
Then
\beq \label{combes-thomas1}
\left \vert \ip{\delta_1}{\left( J-z \right)^{-1} \delta_N} \right \vert
\leq \frac{2e}{\sigma} e^{-\alpha_N \cdot N}
\eeq
where $\alpha_N=\min \left(1, \frac{\sigma}{4e f(N)} \right)$.
In particular
\beq \label{combes-thomas2}
\left \vert \ip{\delta_1}{\left( J_{\Upsilon,\omega}-z \right)^{-1} \delta_N} \right \vert
\leq \frac{2e}{\sigma} e^{-C(\sigma, \omega) \cdot N^{1-\eta_1}}
\eeq
almost surely.
\end{theorem}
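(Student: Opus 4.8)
The plan is to run a Combes--Thomas argument, conjugating $J$ by a diagonal exponential weight; the one new feature, forced by the unboundedness of the $a(n)$, is that the weight must be \emph{saturated} at the target site $N$, so that the conjugation perturbs only finitely many off-diagonal entries, each of which is controlled by $f(N)$. Concretely, I would fix $\alpha\in(0,1]$ and introduce the diagonal multiplication operator $M$ with entries $m(n)=e^{-\alpha\min(n,N)}$. Since $0\le\min(n,N)\le N$, both $M$ and $M^{-1}$ are bounded, so $J_M:=MJM^{-1}$ is a closed operator with the same domain as $J$, and the matrix element factorizes as
\[
\langle \delta_1,(J-z)^{-1}\delta_N\rangle=\frac{m(N)}{m(1)}\,\langle \delta_1,(J_M-z)^{-1}\delta_N\rangle,
\]
whence $|\langle \delta_1,(J-z)^{-1}\delta_N\rangle|\le e^{-\alpha(N-1)}\,\|(J_M-z)^{-1}\|$.

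The next step is to bound $\|J_M-J\|$. Because $J$ is tridiagonal, $J_M-J$ has zero diagonal, and its only nonzero entries are $a(n)(e^{\alpha}-1)$ (super-diagonal) and $a(n)(e^{-\alpha}-1)$ (sub-diagonal) for $n<N$, since the ratio $m(n)/m(n+1)$ equals $e^{\alpha}$ for $n<N$ and $1$ for $n\ge N$. Viewing the super- and sub-diagonal parts as weighted shifts, whose operator norms equal the suprema of their weights, and using that $f$ is nondecreasing so that $a(n)\le f(N)$ for $n<N$, one obtains $\|J_M-J\|\le 2(e^{\alpha}-1)f(N)\le 2e\,\alpha\,f(N)$ for $\alpha\in(0,1]$.

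Now choose $\alpha=\alpha_N=\min\bigl(1,\sigma/(4ef(N))\bigr)$; a short case check shows this forces $\|J_M-J\|\le\sigma/2$ whether the minimum is $1$ or $\sigma/(4ef(N))$. Since $J$ is self-adjoint, $\|(J-z)^{-1}\|=\sigma^{-1}$, so the Neumann series for $I+(J-z)^{-1}(J_M-J)$ converges, showing $z\notin\mathrm{spec}(J_M)$ and $\|(J_M-z)^{-1}\|\le 2/\sigma$. Combining this with the factorization and absorbing $e^{-\alpha_N(N-1)}=e^{\alpha_N}e^{-\alpha_N N}\le e\,e^{-\alpha_N N}$ (as $\alpha_N\le1$) gives \eqref{combes-thomas1}. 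For \eqref{combes-thomas2}, Lemma \ref{almost-sure-decay} guarantees that almost surely $a_{\Upsilon,\omega}(n)\le C(\omega)n^{\eta_1}$, so the nondecreasing choice $f(n)=C(\omega)n^{\eta_1}$ is admissible; for large $N$ the minimum defining $\alpha_N$ is attained by its second argument, and $\alpha_N N=\tfrac{\sigma}{4eC(\omega)}N^{1-\eta_1}$, which is the claimed exponent.

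The main obstacle is exactly the unboundedness flagged above: with the naive weight $e^{-\alpha n}$ the perturbation $J_M-J$ would be unbounded, since its $n$th off-diagonal entry scales like $a(n)(e^{\alpha}-1)$ with $a(n)\to\infty$, and the Neumann-series step would collapse. Saturating the weight at $N$ is what confines the damage to sites below $N$ and replaces the divergent $\sup_n a(n)$ by the finite $f(N)$. The price paid is that the effective decay rate $\alpha_N$ degrades as $f(N)$ grows, which is precisely what converts a clean exponential into the stretched-exponential rate $N^{1-\eta_1}$ in \eqref{combes-thomas2}.
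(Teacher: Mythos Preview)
Your proof is correct and is essentially the same as the paper's: the paper conjugates by the diagonal matrix $R_N$ with $R_N(n,n)=e^{\alpha_N\min(n,N)}$ (i.e., your $M^{-1}$), bounds $\|J-R_N^{-1}JR_N\|\le 2e f(N)\alpha_N$ exactly as you do, and then uses the resolvent identity in place of your Neumann series to obtain $\|(R_N^{-1}(J-z)R_N)^{-1}\|\le 2/\sigma$. The saturation of the weight at site $N$, which you correctly identify as the key device needed to handle unbounded $a(n)$, is precisely what the paper does.
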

\begin{remark*}
The monotonicity of $f$ is not essential. One may instead replace $f(N)$ in the formula for $\alpha_N$, by
$\max(f(1),\ldots,f(N))$.
\end{remark*}
\begin{proof}
Let $R_N$ be the diagonal matrix defined by
\beq \label{definitionRN}
R_N(n,n)= \left \{ \begin{array}{cc}
e^{\alpha_N \cdot n} & n \leq N \\
e^{\alpha_N \cdot N} & n > N.
\end{array} \right.
\eeq
Then
\beq \no
e^{\alpha_n \cdot (N-1)} \ip{\delta_1}{\left( J-z \right)^{-1} \delta_N}=
\ip{\delta_1}{R_N^{-1} \left(J-z \right)^{-1} R_N \delta_N}
\eeq
so it suffices to bound $\parallel R_N^{-1} \left( J-z \right)^{-1} R_N \parallel$. Noting that
$R_N^{-1} \left( J-z \right)^{-1} R_N= \left(R_N^{-1} \left( J-z \right)R_N \right)^{-1} \equiv C_N(z)$, we may apply the Resolvent
identity to get
\beq \no
C_N(z)=\left(J-z \right)^{-1}+C_N(z)
\cdot \left( J-z-R_N^{-1} \left( J-z \right) R_N \right) \left(J-z \right)^{-1}.
\eeq
A simple computation shows that $\parallel J-z-R_N^{-1} \left( J-z \right) R_N \parallel \leq 2 e f(N) \alpha_N$
if $\alpha_N \leq 1$. Thus, by $\parallel \left( J-z \right)^{-1} \parallel \leq \frac{1}{\sigma}$, we see that
\beq \no
\parallel C_N(z) \parallel \leq \frac{1}{\sigma}+C_N(z) \frac{2 e f(N)}{\sigma} \alpha_N
\eeq
so, by $\alpha_N \leq \frac{1}{2} \cdot \frac{\sigma}{2 e f(N)}$, we see that
\beq \no
\parallel C_N(z) \parallel \leq \frac{2}{\sigma}
\eeq
which finishes the proof.
\end{proof}

{\bf Acknowledgments}
We are grateful to Peter Forrester and Uzy Smilansky for presenting us with the problem that led to this paper.
We also thank Yoram Last and Uzy Smilansky for many useful discussions. We thank the referee for useful remarks.

This research was supported in part by THE ISRAEL SCIENCE FOUNDATION (grant no.\ 1169/06) and by grant
no.\ \mbox{2002068} from the
United States-Israel Binational Science Foundation (BSF), Jerusalem, Israel.


\end{document}